\newtheoremstyle{theorem}% name
  {15pt}          % space above
  {15pt}  % space below
  {\sl}  % bofy font
  {\parindent}
\theoremstyle{theorem}
\newtheorem{theorem}{Theorem}[section]
\newtheorem{corollary}{Corollary}[section]
\newtheoremstyle{defi}% name
  {15pt}          % space above
  {15pt}  % space below
  {\rm}  % bofy font
  {\parindent}     % ident - empty=no indent,  \parindent= paragraph indent
  {\sc}  % thm head font
  {. }    % punctuation after thm head
  { }    % space after thm head: `` ``=normal \newline=linebreak
  {}     % thm head specification
\theoremstyle{defi}
\newtheorem{remark}{Remark}[section]
 \def\proofend{\hfill$\Box$}
 \title[Robustness and convergence of fractional \dots]
       {Robustness and convergence of fractional \\ [3pt] systems and
        their % its % ??
        applications to \\ [3pt]  adaptive schemes  }
 \author[\normalsize J. A. Gallegos, M. A. Duarte-Mermoud]
        {\normalsize Javier A. Gallegos $^1$, Manuel A. Duarte-Mermoud $^2$}
\begin{document}

% \vbox to 2.5cm { \vfill }
\noindent
Submitted to: \emph{Fract. Calc. Appl. Anal.} (http://www.degruyter.com/view/j/fca), Vol. 20 (2017).

 \bigskip \medskip

 \begin{abstract}

Our general aim is to give sufficient conditions for robustness behavior and convergence to the equilibrium point of linear time-varying fractional system's solutions. We approach this problem using as a framework a series of recent results due to Cong et al. We establish theorems that generalize in several ways many previous results in the specialized literature, including those of Cong et al. We use the proposed theorems in adaptive systems, proving convergence and robustness of such schemes, that up to date remain unsolved problems, showing the wide scope of application of our results.

 \medskip

{\it MSC 2010\/}: Primary 26A33; Secondary 34A08, 34K37, 62F35

 \smallskip

{\it Key Words and Phrases}: robustness, fractional differential equations, adaptive systems

 \end{abstract}

 \maketitle

%%%%%%% end make title %%%%%%%%%%%%%%%%%%%%%%%%%%%%%%%%%%
 \vspace*{-16pt}

%%%%%%%% begin papers' body %%%%%%%%%%%%%%%%%%%%%%%%%%%%%

%%%%%%%%%%%%%%%%%%%%%%%%%%% Section 1 %%%%%%%%%%%%%
%\section{Introduction}\label{Sec:1}

 \section{Introduction}\label{sec:1}

\setcounter{section}{1}
\setcounter{equation}{0}\setcounter{theorem}{0}

In a recent series of papers [1-6], Cong et al. have presented a powerful fixed point technique to asymptotically analyze Caputo fractional systems with $\alpha \in (0,1)$ described by the following equation,
\begin{equation} \label{Linear1}
\begin{cases} D^{\alpha} x(t) = (A + Q(t))x(t)  \\ x(0) = x_0 \end{cases} ,
\end{equation}
where we have implicitly assumed that the initial time for the derivative $t=0$, is the same that the time for the initial condition,  with $x \in \mathbb{R}^n$ and $x_0 \in\mathbb{R}^n $ is an arbitrary initial condition. They were able to prove (\cite[Theorem 5]{C.Asym}) that if $Q : [0, \infty) \rightarrow \mathbb{R}^{n \times n}$ is a continuous matrix function such that $q = \sup_{t \geq 0} \int_0 ^t \tau^{\alpha-1}  || E_{\alpha, \alpha}(A,\tau)  [Q(t - \tau)]|| d\tau < 1$ and if the spectrum $\sigma(A)$ of $A \in \mathbb{R}^{n \times n}$ is such that $\sigma(A) \subset \{ \lambda \in \mathbb{C} \setminus \{0\}: |\arg (\lambda)| > \frac{\alpha \pi}{2} \}$,  then $x$ is bounded and $\lim_{t\rightarrow \infty} x(t)=0$.   Actually, they show that instead of condition $q<1$  it is enough that  $\sup_{t\geq 0} ||Q(t)||$ be small enough or $\lim_{t\rightarrow \infty} ||Q(t)|| = 0$ (\cite[Theorems 7, 8]{C.Asym}).

Under the same assumption for $A$ and similar arguments, they prove in \cite[Theorem 5]{C.Lin} that for the system
\begin{equation} \label{Lin}
\begin{cases} D^{\alpha} x(t) = Ax(t) + f(x(t)) \\ x(0) = x_0 \end{cases} ,
\end{equation}
where $f$ is locally Lipschitz around the origin with $f(0)=0$ with Lipschitz constant $L(r)$ such that $\lim_{r \rightarrow 0^+} L(r) =0$ then $\lim_{t \rightarrow \infty} ||x(t)|| = 0$ for any $x_0$ close enough to the origin and $||x(0)|| < r$. Roughly speaking, one can always find a sufficiently small $r$ such that $\sup_{t} ||Q(t)=L(r)I||$ is sufficiently small, and therefore, to apply the previous result. Also, for a system (\ref{Lin}) where $A$ is such that $\sigma(A) \subset \{ \lambda \in \mathbb{C} \setminus \{0\}: |\arg (\lambda)| < \frac{\alpha \pi}{2} \}$, they prove that the origin $x=0$ is unstable  (\cite[Theorem 5]{C.Instability}). Hence, The first Lyapunov Theorem on stability and instability for fractional systems follows as a corollary, i.e. for an autonomous system $D^{\alpha} x = f(x(t))$ the local asymptotic stability is related to the eigenvalues of the Jacobian of $f$, a more general result that the one obtained using Lyapunov approach in \cite[Example 4]{FLT}. Next, they provide a Lipschitz characterization of the stable manifold for system (\ref{Lin}) where $A$ can have eigenvalues belonging to $\{ \lambda \in \mathbb{C} \setminus \{0\}: |\arg (\lambda)| < \frac{\alpha \pi}{2} \}$  (\cite[Theorem 4]{C.Manifold}).

For the following system
\begin{equation} \label{Linear2}
\begin{cases} D^{\alpha} x(t) = Ax(t) + f(t)  \\ x(0) = x_0 \end{cases}
\end{equation}
with $f$ a continuous function, they show a distinctive feature of fractional systems, namely that there exists a solution (there exists an initial condition) so that (i) if $f$ is bounded, $x$ is bounded (ii) if $f$ converges to zero, $x$ converges to zero  if and only if $\sigma(A) \subset \{ \lambda \in \mathbb{C} \setminus \{0\}: |\arg (\lambda)| > \frac{\alpha \pi}{2} \} \cup  \{ \lambda \in \mathbb{C} \setminus \{0\}: |\arg (\lambda)| < \frac{\alpha \pi}{2} \} $ (at least one eigenvalue of A must be unstable) (\cite[Theorem 13]{C.Perron}).

In a somewhat different matter, though fractional systems are not dynamical ones (they show it with a intersection of trajectories argument; by other means, this was proved in \cite[Proposition 2]{FLT}), some fractional systems define a non local dynamical system, i.e. it is possible to construct a 2-flow $\varphi_{s,t}$ such that $\varphi_{s,t} \> o \> \varphi_{u,s} = \varphi_{u,t}$. It is achieved by choosing $\varphi_{s,t}= \varphi_{0,t} \> o \> \varphi_{0,s}^{-1}$ where $\varphi_{0,t} (x_0)$ is interpreted as $x(t)$ given that the initial condition in $x(0)=x_0$ (\cite[Theorem 13]{C.Flow}). However, the interpretation of $\varphi_{s,t}$ cannot be $x(t)$ given the initial condition in $s>0$, as in dynamic flows. In fact, consider a particular linear system $\varphi_{0,t}(x_0)= x_0 E_{\alpha }(t^{\alpha})$. Then $\varphi_{0,s}^{-1} (x(s))= x_0$ where obviously $x(s)= x_0 E_{\alpha }(s^{\alpha})$. But $\varphi_{s,t} (x(s)) = x(s) E_{\alpha}((t-s)^{\alpha})= \varphi_{s,t} \> \> o \> \> \varphi_{0,s} (x_0)=  x_0 E_{\alpha }(s^{\alpha}) E_{\alpha}((t-s)^{\alpha}) \neq x(0)E_{\alpha }(t^{\alpha})=\varphi_{0,t} (x_0)$ since $E_{\alpha }(t^{\alpha}) \neq E_{\alpha }(s^{\alpha}) E_{\alpha}((t-s)^{\alpha})$ provided that $\alpha \neq 1$.

We remark that the alternative approach to study convergence using Lyapunov functions (\cite{FLT, FS}) has the following disadvantage: by defining those functions as scalar ones, positive or negative definite restrictions on the matrix result to establish the sign of its fractional derivatives (\cite[Example 6]{FLT}).

Our contribution is first to extend above theorems to include several new aspects such as: a  wider notion of robustness, the use of time varying linear system, a piece continuity requirement, the case of $\alpha \geq 1$ and other types of derivatives, and the extension for mixed order systems. Secondly, these extensions are to be used in an application of fractional calculus in adaptive control of integer systems. These extensions will guarantee robust behavior of the overall controlled system without modifying the adaptive scheme but requiring conditions on the information signals. On the other hand, the extensions ample the class of information signals that assure error convergence to zero when the perturbation is absent. Therefore, they also extend recent results on this subject in the adaptive control literature. (\cite{FA}).

The paper is organized as follows: Section 2 presents the main contribution of the paper, which is the generalization of the theorems just mentioned. Section 3 shows the usefulness of these generalizations by applying them to determine the convergence and robustness of adaptive scheme designs with fractional operators.

\section{Robustness and convergence}

\setcounter{section}{2}
\setcounter{equation}{0}\setcounter{theorem}{0}

We present theorems that extend in many aspects previous results in the revised literature. Though the arguments  mainly follow the fixed point technique of Cong et al, fine details are required at each argumentation. The results have mathematical interest in its self and will show its  importance in applications in Section 3.

Along this work, we use $||\cdot||$ for the standard norm on $\mathbb{R}^n$ and also for any matrix norm compatible with the standard norm on $\mathbb{R}^n$.

\subsection{Robustness of linear time invariant fractional systems}
\textbf{    }

\textbf{    }

Though \cite[Theorem 5]{C.Asym} is presented as a robustness result, because the convergence property is preserved when $Q(t)$ variations occur around $A$, a more general robustness condition can be stated where just the boundedness property of solutions is preserved. The following result considers this view and, at the same time, is a generalization to linear time varying systems of the sufficient part of \cite[Theorem 13]{C.Perron}.

\begin{theorem}
Consider the following Caputo system
\begin{equation}
D^{\alpha} x(t) = (A + Q(t))x(t) + \nu(t)
\end{equation}
with $\alpha \in (0,1)$, $x(t) \in \mathbb{R}^n$ for all $t \geq 0$ and $\nu : [0, \infty) \rightarrow \mathbb{R}^n$  a bounded continuous function ($ ||\nu||_\infty < \nu_0$). Let $x_0 \in\mathbb{R}^n $ be an arbitrary initial condition at the initial time of the Caputo derivative. Suppose that $Q : [0, \infty) \rightarrow \mathbb{R}^{n \times n}$ is a continuous matrix function such that
\begin{equation}\label{Q}
q := \sup_{t\geq 0} || \int_0 ^t \tau^{\alpha-1}  E_{\alpha, \alpha}(A,\tau)  Q(t - \tau) d\tau || < 1
\end{equation}
for any matrix norm compatible with the standard norm on $\mathbb{R}^n$ and that $A \in \mathbb{R}^{n \times n}$ is a constant matrix such that
\begin{equation} \label{QE}
\sigma(A) \subset \Lambda^s_\alpha := \{ \lambda \in \mathbb{C} \setminus \{0\}: |\arg (\lambda)| > \frac{\alpha \pi}{2} \}
\end{equation}

Then,

(i) $x$ is a bounded continuous function.

(ii) If in addition $ \lim_{t \rightarrow \infty} || \int_0 ^t \tau^{\alpha-1}  E_{\alpha, \alpha}(A,\tau)  \nu (t-\tau) d\tau || = 0$ then $\lim_{t \rightarrow \infty} ||x(t)||=0$. In particular, if $\nu$ converges to zero, then $\lim_{t \rightarrow \infty} ||x(t)||=0$.
\end{theorem}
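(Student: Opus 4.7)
The plan is to transplant the fixed-point strategy of Cong et al.\ to the inhomogeneous setting. Applying the Caputo variation of constants formula to the linear time-varying IVP and changing variables $s\mapsto\tau:=t-s$ in the resulting convolution yields
\[
 x(t) = E_{\alpha}(At^{\alpha})\,x_{0} + \int_{0}^{t} \tau^{\alpha-1}E_{\alpha,\alpha}(A\tau^{\alpha})\bigl[Q(t-\tau)x(t-\tau)+\nu(t-\tau)\bigr]\,d\tau .
\]
Let $X := C_{b}([0,\infty),\mathbb{R}^{n})$ endowed with the sup-norm and define $T:X\to X$ by the right-hand side of this equation with $\xi$ replacing $x$. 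I would exhibit $x$ as the unique fixed point of $T$.

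For part (i), I first check $T(X)\subset X$. Because $\sigma(A)\subset\Lambda_{\alpha}^{s}$, the classical Matignon-type decay gives $\|E_{\alpha}(At^{\alpha})x_{0}\|=\mathcal{O}((1+t^{\alpha})^{-1})$, and moreover $M := \int_{0}^{\infty}\tau^{\alpha-1}\|E_{\alpha,\alpha}(A\tau^{\alpha})\|\,d\tau<\infty$. Hence the free term is bounded (and in fact decays), and the $\nu$-term is bounded by $M\,\nu_{0}$; continuity of $(T\xi)(\cdot)$ is immediate from continuity of $Q$, $\nu$ and the kernel. Assumption (\ref{Q}) then produces the contraction estimate
\[
 \|(T\xi_{1}-T\xi_{2})(t)\| \le q\,\|\xi_{1}-\xi_{2}\|_{\infty},\qquad \xi_{1},\xi_{2}\in X,
\]
so that $T$ is a $q$-contraction with $q<1$. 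Banach's theorem supplies a unique fixed point $\tilde{x}\in X$, and since the integral equation is equivalent to the Caputo IVP and solutions are locally unique for a continuous linear right-hand side, $\tilde{x}=x$. This gives boundedness and continuity, i.e.\ part (i).

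For part (ii), consider the closed subspace $X_{0}\subset X$ of functions vanishing at infinity. Under the additional hypothesis, the free term and the $\nu$-convolution both belong to $X_{0}$ (in the particular case $\nu\to 0$, a standard splitting of the convolution together with the integrability of $\tau^{\alpha-1}\|E_{\alpha,\alpha}(A\tau^{\alpha})\|$ yields this). I would then establish $T(X_{0})\subset X_{0}$ by splitting the memory integral as $\int_{0}^{t}=\int_{0}^{T_{\varepsilon}}+\int_{T_{\varepsilon}}^{t}$: on $\tau\in[0,T_{\varepsilon}]$ the factor $\xi(t-\tau)$ is small for $t$ large since $t-\tau\ge t-T_{\varepsilon}\to\infty$, while on $\tau\in[T_{\varepsilon},t]$ the tail of the Mittag-Leffler kernel is small uniformly in $t$ by the integrability of $M$. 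Since $X_{0}$ is closed in $X$ and $T$ is a contraction, the unique fixed point $x$ must lie in $X_{0}$, i.e.\ $\|x(t)\|\to 0$.

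The main technical obstacle lies in the splitting argument of part (ii): one must bound $\int_{0}^{T_{\varepsilon}}\tau^{\alpha-1}\|E_{\alpha,\alpha}(A\tau^{\alpha})Q(t-\tau)\|\,d\tau$ uniformly in $t$ using only the continuity of $Q$ and hypothesis (\ref{Q}); the local boundedness of $Q$ on the translated window $[t-T_{\varepsilon},t]$ (together with $T_{\varepsilon}$ fixed) is what makes this go through. Everything else is a direct application of the Banach principle plus invariance on the closed subspace $X_{0}$.
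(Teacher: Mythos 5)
Your part (i) is essentially the paper's proof: the same Lyapunov--Perron operator, the same use of \cite[Theorem 3]{C.Asym} to get $M=\int_0^\infty\tau^{\alpha-1}\|E_{\alpha,\alpha}(A\tau^\alpha)\|\,d\tau<\infty$, and the same contraction argument on the space of bounded continuous functions. Part (ii) is where you genuinely diverge. You prove that the closed subspace $X_0$ of functions vanishing at infinity is invariant under $T$ and conclude that the fixed point lies in $X_0$ --- this is the invariant-subspace technique of Cong et al.'s original papers, and it is arguably cleaner: convergence comes out of the same fixed-point application as boundedness. The paper instead works directly with the solution already obtained in (i) and runs a $\limsup$ contradiction: it sets $\epsilon=\frac{1-q}{2q+1}\limsup_t\|x(t)\|$, picks $T$ with $\sup_{t\ge T}\|x(t)\|\le\limsup\|x\|+\epsilon$, splits the convolution so that the portion where $x$ has not yet settled is controlled by the tail of the integrable kernel, and derives $\limsup\|x\|\le\frac{2q+q^2}{2q+1}\limsup\|x\|$, forcing $\limsup\|x\|=0$; the ``in particular'' clause for $\nu\to0$ is then handled by a convolution lemma ($\mathcal{L}^1$ kernel against a bounded function tending to zero, \cite[Lemma 2]{FS}), which your splitting reproves by hand. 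One caveat applies to your argument as written: in the piece $\int_0^{T_\varepsilon}$ the factor $Q(t-\tau)$ ranges over the moving window $[t-T_\varepsilon,t]$, and in the piece $\int_{T_\varepsilon}^t$ the product $Q(t-\tau)\xi(t-\tau)$ ranges over essentially all of $[0,\infty)$, so ``local boundedness of $Q$ on the translated window'' is not enough --- you need $\sup_t\|Q(t)\|<\infty$, or else you must keep the kernel and $Q$ together and invoke hypothesis (\ref{Q}) on the combined quantity $\int\tau^{\alpha-1}E_{\alpha,\alpha}(A\tau^\alpha)Q(t-\tau)\,d\tau$. The paper's proof carries the same implicit assumption (and indeed later statements, e.g.\ Theorem 2.6, add boundedness of $Q$ explicitly), so this is a shared gap rather than a defect specific to your route; it would be worth making the boundedness of $Q$ explicit either way.
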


\proof

Define the Lyapunov-Perron like operator as
\begin{equation*}
\begin{aligned}
\mathcal{T}_{x_0} (\xi) (t) := & E_{\alpha} (t^{\alpha} A) x_0 + \int_0 ^t \tau^{\alpha-1}  E_{\alpha, \alpha}(A,\tau)  Q (t-\tau) \xi(t-\tau) d\tau \\ & + \int_0 ^t \tau^{\alpha-1}  E_{\alpha, \alpha}(A,\tau)  \nu (t-\tau) d\tau.
\end{aligned}
\end{equation*}

Let $\epsilon > 0$ arbitrary and $\xi  \in B_ {{\infty}} (0, \epsilon) := \{ \xi \in \mathcal{C}([0, \infty); \mathbb{R}^n) : ||\xi||_{\infty} \leq \epsilon \} $. Using that $ E_{\alpha} (t^{\alpha} A) x_0$ is bounded and $\int_0 ^t \tau^{\alpha-1} ||E_{\alpha, \alpha}(\tau ^{\alpha} A)|| d\tau < \infty$ since $\sigma(A) \subset \Lambda^s_\alpha$ (\cite[Theorem 3]{C.Asym}), and that $\nu$ is bounded, we have
\begin{equation*}
||\mathcal{T}_{x_0} (\xi) ||_\infty \leq C(\nu_0) < \infty
\end{equation*}
where $C(\nu_0)$ is a constant term depending on the bound $\nu_0$ of function $\nu$. Hence, $\mathcal{T}_{x_0} (B_ {\infty} (0, \epsilon)) \subset B_ {\mathcal{C}_{\infty}} (0, C(\nu_0))$.

Since $q = \sup_{t\geq 0} || \int_0 ^t \tau^{\alpha-1}  E_{\alpha, \alpha}(A,\tau)  [Q(t - \tau)]||$, we have
\begin{equation*}
||\mathcal{T}_{x_0} (\xi) - \mathcal{T}_{x_0} (\tilde{\xi}) ||_\infty \leq q ||\xi - \tilde{\xi}||_\infty.
\end{equation*}

Therefore, $\mathcal{T}_{x_0} $ restricted to the set $\mathcal{C}_{\infty} :=  \{ \xi \in \mathcal{C}([0, \infty); \mathbb{R}^n) : ||\xi||_{\infty} < \infty \} $ is a contraction auto map in a Banach space, whereby it has  a unique fixed point by the contraction mapping theorem in the space $\mathcal{C}_{\infty} $. Since the unique solution in $\mathcal{C}([0, \infty))$ to  $D^{\alpha} x = (A + Q)x + \nu$ can be written as $x(t)= E_{\alpha} (t^{\alpha} A) x_0 + \int_0 ^t \tau^{\alpha-1}  E_{\alpha, \alpha}(A,\tau)   Q (t-\tau) x(t-\tau) d\tau + \int_0 ^t \tau^{\alpha-1}  E_{\alpha, \alpha}(A,\tau)  \nu (t-\tau) d\tau= \mathcal{T}_{x_0}(x)(t)$ (see e.g \cite[\S 3]{Podlubny}), we conclude that $x=\xi$, and thus, $x$ is a bounded continuous function since $\xi \in \mathcal{C}_{\infty}$.

(ii) We have
\begin{equation*}
\begin{aligned}
x(t) = & E_{\alpha}(A,t) x(0) + \int_0 ^t \tau^{\alpha-1}  E_{\alpha, \alpha}(A,\tau)  Q(t-\tau) x(t-\tau) d\tau \\
       & + \int_0 ^t \tau^{\alpha-1}  E_{\alpha, \alpha}(A,\tau)  \nu(t-\tau) d\tau
\end{aligned}
\end{equation*}
and taking standard norm on $\mathbb{R}^n$
\begin{equation*}
\begin{aligned}
||x(t)|| \leq & ||E_{\alpha}(A,t) x(0)|| + ||\int_0 ^t \tau^{\alpha-1}  E_{\alpha, \alpha}(A,\tau)  Q(t-\tau) x(t-\tau) ) d\tau|| \\
     & + ||\int_0 ^t \tau^{\alpha-1}  E_{\alpha, \alpha}(A,\tau)  \nu(t-\tau) d\tau||.
\end{aligned}
\end{equation*}

Define $\epsilon := \frac{ 1- q}{2q +1} \limsup_{t} ||x(t)||$ (by the part (i), we have that $\epsilon < \infty$) and $T$ such that $\sup_{t \geq T} |x(t)| \leq \limsup_{t \rightarrow \infty} ||x(t)|| + \epsilon $ (by the part (i), such a $T$ always exists since $x$ is bounded). Then,
\begin{equation*}
\limsup_{t\rightarrow \infty} ||x(t)|| \leq (\limsup_{t\rightarrow \infty}||x(t)|| + \epsilon) \sup_{t \geq T} ||\int_T ^t \tau^{\alpha-1}  E_{\alpha, \alpha}(A,\tau)  Q (t-\tau) d\tau||
\end{equation*}
where  the hypothesis $\lim_{t\rightarrow \infty} \int_0 ^t \tau^{\alpha-1}  ||E_{\alpha, \alpha}(A,\tau)  \nu (t-\tau)|| d\tau =0$ and the fact that $\lim_{t\rightarrow \infty} ||E_{\alpha}(A,t) ||=0$ since $\sigma(A) \subset \Lambda^s_\alpha$ .

Thus,
\begin{equation*}
\limsup_{t \rightarrow \infty} ||x(t)|| \leq  \limsup_{t \rightarrow \infty} ||x(t)|| \frac {2q + q^2}{2q +1} < \limsup_{t \rightarrow \infty} ||x(t)||
\end{equation*}
which is a contradiction if $\limsup_{t \rightarrow \infty} ||x(t)|| \neq 0$. Hence $\limsup_{t \rightarrow \infty} ||x(t)||=0$, whereby $\lim_{t \rightarrow \infty} ||x(t)|| = 0$.

To conclude, note that
\begin{equation*}
0 \leq ||\int_0 ^t \tau^{\alpha-1}  E_{\alpha, \alpha}(A,\tau)  \nu(t-\tau) d\tau|| \leq \int_0 ^t \tau^{\alpha-1}  || E_{\alpha, \alpha}(A,\tau)|| || \nu(t-\tau)|| d\tau.
\end{equation*}

Since $\sup_{t\geq 0}  \int_0 ^t  \tau^{\alpha-1} ||  E_{\alpha, \alpha}(A,\tau) || d\tau  = C(\alpha, A) < \infty$ (\cite[Theorem 3]{C.Asym}), we have $\tau^{\alpha-1} || E_{\alpha, \alpha}(A,\tau)|| \in \mathcal{L}^1$, the Lebesgue space of integrable functions on $\mathbb{R}_{\geq0}$. Since $\nu$ is bounded and converges to zero, we can apply \cite[Lemma 2]{FS}, to conclude that $\lim_{t\rightarrow \infty} \int_0 ^t \tau^{\alpha-1}  || E_{\alpha, \alpha}(A,\tau)|| || \nu(t-\tau)|| d\tau =0$, and the claims follows.

\proofend

\begin{remark}

(i) When $\nu \equiv 0$ we recover \cite[Theorem 5]{C.Asym} and when $Q \equiv 0$ we recover the sufficient part of \cite[Theorem 13]{C.Perron}.

(ii) By the form of the proof, we cannot get an estimate of the speed or the order of the convergence. This drawback is shared in Cong et al. series of papers.

(iii) Note that the same arguments can be stated for complex valued solutions. In particular, $\mathcal{C}([0,\infty);\mathbb{C}^n)$ and its subset $\mathcal{C}_{\infty}$ are also Banach spaces.

\end{remark}

The following corollary generalizes  \cite[Theorem 6]{C.Asym} and  \cite[Theorem 8]{C.Asym} and shows, in its first claim for $T=0$, that hypothesis (\ref{Q}) is not empty.

\begin{corollary}
Under the same hypotheses of Theorem 2.1 and if there exists $T\geq 0$ such that $\sup_{t\geq T} ||Q(t)|| < \frac{1}{C(\alpha, A)}$, where $C(\alpha, A)  := \sup_{t\geq 0}  \int_0 ^t  \tau^{\alpha-1} ||  E_{\alpha, \alpha}(A,\tau) || d\tau < \infty$,  instead of condition (\ref{Q}), claims (i) and (ii) hold. In particular, if $\lim_{t \rightarrow \infty} ||Q(t)||=0$ claims (i) and (ii) hold.
\end{corollary}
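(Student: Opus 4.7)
My strategy is to reduce the corollary to Theorem 2.1, handling $T=0$ directly and splitting the half-line at $T$ when $T>0$. In the case $T=0$, bounding $\|Q(t-\tau)\|$ by its supremum inside the integral defining (\ref{Q}) immediately yields
\[
q\;\leq\;\sup_{s\geq 0}\|Q(s)\|\cdot C(\alpha,A)\;<\;1,
\]
so Theorem 2.1 applies directly; this also shows that the class of $Q$'s satisfying (\ref{Q}) is non-empty, as announced in the statement.

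For $T>0$ the same estimate fails because $Q$ may be large on $[0,T]$, and I would split the half-line at $T$. Standard local existence for linear Caputo equations with continuous coefficients yields a continuous (hence bounded by some $K_T$) solution $x$ on $[0,T]$. For $t\geq T$, I would rewrite the integral formulation as
\[
x(t) = h(t) + \int_0^{t-T} \tau^{\alpha-1} E_{\alpha,\alpha}(A,\tau)\,Q(t-\tau)\,x(t-\tau)\, d\tau,
\]
with
\[
h(t):=E_{\alpha}(t^{\alpha}A)x_0 + \int_{t-T}^{t}\tau^{\alpha-1} E_{\alpha,\alpha}(A,\tau)\,Q(t-\tau)\,x(t-\tau)\, d\tau + \int_0^{t}\tau^{\alpha-1} E_{\alpha,\alpha}(A,\tau)\,\nu(t-\tau)\, d\tau,
\]
bounded because $\tau^{\alpha-1}\|E_{\alpha,\alpha}(A,\tau)\|\in\mathcal{L}^1(0,\infty)$ (the middle piece, integrating only over $\tau\in[t-T,t]$, is in fact vanishing as $t\to\infty$). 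Then I would apply the Banach contraction theorem to
\[
\widetilde{\mathcal{T}}(\xi)(t):=h(t)+\int_0^{t-T}\tau^{\alpha-1} E_{\alpha,\alpha}(A,\tau)\,Q(t-\tau)\,\xi(t-\tau)\, d\tau
\]
on $\mathcal{C}_\infty([T,\infty);\mathbb{R}^n)$: in the remaining integral the constraint $\tau\in[0,t-T]$ forces $t-\tau\in[T,t]$, so $\|Q(t-\tau)\|\leq\sup_{s\geq T}\|Q(s)\|$, and the Lipschitz constant is at most $(\sup_{s\geq T}\|Q(s)\|)\cdot C(\alpha,A)<1$. The unique fixed point coincides with $x|_{[T,\infty)}$, which together with the boundedness on $[0,T]$ gives claim (i).

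For (ii), I would rerun the $\limsup$-contradiction of Theorem 2.1 with the same splitting $\int_0^t=\int_0^{t-T}+\int_{t-T}^{t}$: the main part produces a factor $(\sup_{s\geq T}\|Q(s)\|)\cdot C(\alpha,A)<1$ in front of $\limsup\|x(t)\|+\epsilon$, the bridge term vanishes by the integrability noted above, and the $\nu$ contribution is handled unchanged via \cite[Lemma 2]{FS}. The ``in particular'' statement is then immediate, since $\lim_{t\to\infty}\|Q(t)\|=0$ obviously supplies the required $T$. The main technical subtlety lies in the bridge integral $\int_{t-T}^{t}\tau^{\alpha-1}E_{\alpha,\alpha}(A,\tau)Q(t-\tau)x(t-\tau)\,d\tau$, which contains the potentially large values of $Q$ on $[0,T]$; the crucial observation is that the kernel variable $\tau$ has been pushed into the integrable tail of $\tau^{\alpha-1}\|E_{\alpha,\alpha}(A,\tau)\|$, so its contribution is both bounded and eventually negligible, and it is this asymmetry between ``large $Q$'' and ``small kernel tail'' that makes the $T>0$ reduction possible.
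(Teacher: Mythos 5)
Your proof is correct and follows essentially the same route as the paper: the case $T=0$ is the same direct estimate $q \le \sup_{t}\|Q(t)\|\,C(\alpha,A) < 1$ feeding into Theorem 2.1, and for $T>0$ the paper performs the same splitting of the Volterra integral at $T$, treating the portion where $Q$ may be large as a bounded term that vanishes because the kernel $\tau^{\alpha-1}\|E_{\alpha,\alpha}(A,\tau)\|$ is integrable, before rerunning the contraction and $\limsup$ arguments of Theorem 2.1. Your write-up is if anything more explicit about boundedness on $[0,T]$ and the restricted contraction space; the paper only additionally remarks that one could instead use a weighted norm $\|\cdot\|_{\beta}$ equivalent to $\|\cdot\|_{\infty}$, which you do not need.
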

\proof

By condition (\ref{QE}), we have $\sup_{t\geq 0}  || \int_0 ^t \tau^{\alpha-1}  E_{\alpha, \alpha}(A,\tau) d\tau || = C(\alpha, A) < \infty$ (\cite[Theorem 3]{C.Asym}). If $\sup_{t\geq 0} ||Q(t)|| < \frac{1}{C(\alpha, A)}$, we conclude that $\sup_{t\geq 0} || \int_0 ^t \tau^{\alpha-1}  E_{\alpha, \alpha}(A,\tau)  Q(t - \tau)d\tau || < 1$.

On the other hand, if  $\lim_{t \rightarrow \infty} ||Q(t)||=0$, then there exists $T$ such that $ \sup_{t\geq T} || \int_0 ^t \tau^{\alpha-1}  E_{\alpha, \alpha}(A,\tau)  [Q(t - \tau)]|| < 1$. Following the reasoning of the proof of Theorem 2.1, we have for $\xi \in \mathcal{C}_\infty$
\begin{equation*}
\begin{aligned}
\xi(t) = & E_{\alpha}(A,t) x(0) + \int_0 ^T (t-\tau)^{\alpha-1}  E_{\alpha, \alpha}(A, t-\tau)  Q(\tau) \xi(\tau) d\tau \\
      & + \int_T^t \tau^{\alpha-1}  E_{\alpha, \alpha}(A,\tau)  Q(t-\tau) \xi(t-\tau) d\tau  + \int_0 ^t \tau^{\alpha-1}  E_{\alpha, \alpha}(A,\tau)  \nu(t-\tau) d\tau.
\end{aligned}
\end{equation*}

The first integral of right-hand side converges to zero by applying \cite[Property 15]{FBL} since $||\int_0 ^T (t-\tau)^{\alpha-1}  E_{\alpha, \alpha}(A, t-\tau)  Q(\tau) \xi(\tau) d\tau || \leq C_E \int_0 ^T (t-\tau)^{\alpha-1}   ||Q(\tau) \xi(\tau) ||d\tau $. The rest follows from similar arguments of the proof of Theorem 2.1. Alternatively this can be proved with similar arguments of \cite[Theorem 8]{C.Asym}, where it is required to define a norm $||\cdot||_\beta$ which is equivalent to $||\cdot||_\infty$, because the additional terms associated to $\nu$ are canceled in subtracting the equations.

\proofend

\begin{remark}
Note, for control applications, that condition (\ref{Q}) or $\sup_{t} ||Q(t)|| < \frac{1}{C(\alpha, A)}$ depends on $A$ (more specifically on the magnitude of its eigenvalues) and $\alpha$. By choosing $A, \alpha$ we expand the range of matrices $Q$ assuring convergence (at least in principle, since the result is just a sufficient statement).
\end{remark}

\subsection{Comparison property}

The following result generalizes \cite[Theorem 5]{C.Asym} in the sense that matrix $A$ can now be a time varying function and the condition on $A(\cdot)$ is given as an inequality rather than an equality of type $A + Q(t) $. It can be seen as a comparison result generalizing that of \cite[Theorem 4]{FS}.

\begin{theorem}
Consider the following Caputo system
\begin{equation}
D^{\alpha} x(t) = A(t)x(t) + \nu(t)
\end{equation}
with $\alpha \in (0,1)$, $x \in \mathbb{R}^n$ and $\nu : [0, \infty) \rightarrow \mathbb{R}^n$  a bounded continuous function. Let $x_0 \in\mathbb{R}^n $ be the arbitrary initial condition. Suppose that $A: [0, \infty) \rightarrow \mathbb{R}^{n \times n}$ is a differentiable matrix function  satisfying $A(t) \leq -\epsilon I + Q(t) $ where $I$ is the identity matrix.

(i) If $\sup_{t} ||Q(t)|| < \mu < \epsilon$, then the solution $x$ is a bounded continuous function.

(ii) If $Q: [0, \infty) \rightarrow \mathbb{R}^{n \times n}$ is a bounded continuous matrix function such that $q = \sup_{t\geq 0} | \int_0 ^t \tau^{\alpha-1}  E_{\alpha, \alpha}(-\epsilon t^{\alpha})  [\lambda_M(t - \tau)]| d\tau < 1$, where $\lambda_M(t)$ is the largest eigenvalue of $Q(t)$ and $\nu \equiv 0$, then $\lim_{t \rightarrow \infty} x(t) = 0$.

\end{theorem}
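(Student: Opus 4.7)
The plan is to reduce the matrix system to a scalar fractional inequality by means of a quadratic Lyapunov-like function, and then to invoke Theorem 2.1 (or Corollary 2.1) applied to the resulting scalar comparison equation.

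First, I would set $V(t) := \|x(t)\|^2$ and use the fractional chain-rule inequality $D^\alpha V(t) \leq 2\, x(t)^T D^\alpha x(t)$, a by-now standard estimate in fractional Lyapunov theory (see e.g.\ \cite[Lemma 1]{FS}); the differentiability of $A(\cdot)$ provides enough regularity of $x$ to apply it. Substituting the equation and invoking the matrix bound $A(t) \leq -\epsilon I + Q(t)$ (interpreted on symmetric parts, so that $x^T A(t) x \leq -\epsilon\|x\|^2 + \lambda_M(t)\|x\|^2$) gives
$$
D^\alpha V(t) \leq -2\epsilon\, V(t) + 2\lambda_M(t)\, V(t) + 2\, x(t)^T \nu(t).
$$

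Next I would apply a fractional comparison principle (in the spirit of \cite[Theorem 4]{FS}) to dominate $V$ by the solution $W$ of the corresponding scalar Caputo equation. For part~(i), since $|\lambda_M(t)| \leq \|Q(t)\| \leq \mu < \epsilon$, and treating the cross term $2\, x^T \nu$ via Young's inequality, the comparison reads $D^\alpha W = -2(\epsilon-\mu)\, W + g(t)$ with $g$ bounded; since $-2(\epsilon-\mu) \in \Lambda^s_\alpha$, Theorem 2.1 (or Corollary 2.1) in its scalar form yields boundedness of $W$, hence of $\|x\|$. For part~(ii) with $\nu \equiv 0$, the reduction is to the scalar instance of Theorem 2.1 with drift $-2\epsilon \in \Lambda^s_\alpha$ and perturbation $2\lambda_M(t)$; once the factor of $2$ is reconciled against the scalar kernel (using $E_{\alpha,\alpha}(-2\epsilon\tau^\alpha) \leq E_{\alpha,\alpha}(-\epsilon\tau^\alpha)$ for $\tau>0$), the hypothesis $q<1$ furnishes exactly the contraction condition (\ref{Q}), giving $W(t) \to 0$, whence $\|x(t)\| \to 0$.

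The principal obstacle I foresee is the rigorous fractional comparison step: the classical integer-order argument rests on strict-inequality propagation which the nonlocal Caputo operator does not admit as directly. One must proceed through the equivalent Volterra integral representation and a monotone-iteration (or contraction) argument in order to show $V(t) \leq W(t)$ for all $t \geq 0$, with extra care warranted because the coefficient $\lambda_M(t)$ is time-varying. A secondary subtlety, worth flagging in the proof, is that the matrix inequality $A(t) \leq -\epsilon I + Q(t)$ only sees the symmetric parts of the matrices (since $x^T A x$ depends solely on the symmetric part of $A$), so the passage to the largest eigenvalue $\lambda_M(t)$ should be based on the symmetrised $Q(t)$.
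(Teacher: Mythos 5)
Your argument for part (ii) is essentially the paper's: both start from the quadratic-form inequality $\tfrac12 D^{\alpha}(x^Tx)\le x^T D^{\alpha}x$ (justified via \cite[Lemma 1]{cuadr,cuadrt}, with differentiability of $A$ and $\nu$ supplying the needed regularity of $x$), pass to $D^{\alpha}V\le -2\epsilon V + 2\lambda_M(t)V$, and close by the comparison lemma \cite[Lemma 1]{FS} together with Theorem 2.1 applied to the scalar majorant equation; the rigorous comparison step you flag as the main obstacle is precisely what the paper delegates to that cited lemma, so no new construction is required there. For part (i) you genuinely diverge: the paper does not compare with a scalar equation at all, but observes that the set $\Omega=\{x:\ (\epsilon-\mu)x^Tx - x^T\nu\le 0\}$ is bounded (the quadratic term dominates) and then invokes the ultimate-boundedness result \cite[Theorem 8]{FS}, whereas you absorb the cross term $2x^T\nu$ by Young's inequality and reuse Theorem 2.1 on the resulting forced scalar equation. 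Your route buys uniformity --- everything reduces to Theorem 2.1 --- at the cost of an additional comparison step with a forcing term; the paper's is shorter once \cite[Theorem 8]{FS} is granted. Two remarks: your point that the matrix inequality and $\lambda_M$ must be read on symmetric parts is correct and is left implicit in the paper; and your reconciliation of the factor $2$ in part (ii) via $E_{\alpha,\alpha}(-2\epsilon\tau^{\alpha})\le E_{\alpha,\alpha}(-\epsilon\tau^{\alpha})$ only bounds the relevant contraction constant by $2q$, so as written it would require $q<\tfrac12$ --- though the paper itself applies Theorem 2.1 to $\tfrac12 D^{\alpha}y=-\epsilon y+\lambda_M(t)y$ without addressing this normalization either, so this is a shared looseness rather than a defect specific to your proof.
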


\proof

(i)  We have
\begin{equation*}
\begin{aligned}
\> \>  1/2 D^{\alpha} [x^Tx]  \> \> \> \> \> \> \> \>  \leq {} & x^T D^{\alpha} x  =  x^TAx + x^T \nu  \\
   & \leq -\epsilon x^Tx + x^TQx + x^T \nu \leq -\epsilon x^Tx + \mu x^Tx + x^T \nu
\end{aligned}
\end{equation*}
where the first inequality is due to \cite[Lemma 1]{cuadr, cuadrt} which can be applied since $\nu$ and $A$ are differentiable functions, whereby $x$ is a differentiable function \cite[Property 12]{FS}.

Given that $Q$ and $\nu$ are bounded functions, the set $\Omega = \{ x \in \mathbb{R}^n |  \epsilon x^Tx - \mu x^Tx - x^T \nu \leq 0 \}$ is bounded (otherwise the positive term $(\epsilon- \mu) x^Tx$ would dominate leading to a contradiction). Whereby $\bar{\Omega}$ is compact and therefore $x$ is bounded by \cite[Theorem 8]{FS}.

(ii) We have
\begin{equation*}
1/2 D^{\alpha} [x^Tx] \leq x^T D^{\alpha} x  \leq  x^TAx \leq x^T(-\epsilon I + Q)x \leq -\epsilon x^Tx + \lambda_M(t) x^Tx
\end{equation*}
where the third and fourth inequalities are due to the hypothesis on $A(t)$. By using Theorem 2.1 on the equation $1/2 D^{\alpha} y = - \epsilon y + \lambda_M(t) y$ we conclude that $\lim_{t \rightarrow \infty} y(t) = 0$. By comparison principle \cite[Lemma 1 ]{FS}, we conclude that  $\lim_{t \rightarrow \infty} x(t) = 0$.

\proofend

\begin{remark}
If $||Q||$ converges to zero, then $\lambda_M$ also converges to zero and applying Corollary 2.1, we have $\lim_{t \rightarrow \infty} x = 0$. Similarly if $\sup_{t} ||Q(t)|| < \frac{1}{C(\alpha, A)}$  for an induced matrix norm (for an induced matrix norm, the bound of the matrix norm bounds the largest eigenvalue).

\end{remark}

\subsection{Extensions for $\alpha \geq 1$ and other types  of derivatives}
\textbf{    }

\textbf{    }

The starting point of Cong et al. is the analytic-implicit solution of  equation (\ref{Linear1}). For $ 1 < \alpha \leq 2$, it takes the form \cite[Theorem 5.15]{Kilbas}
\begin{equation}\label{alpha.gret}
x(t)= E_{\alpha} (t^{\alpha} A) x_0  + t E_{\alpha, 2} (t^{\alpha} A) \dot{x}_0 +  \int_0 ^t \tau^{\alpha-1}  E_{\alpha, \alpha}(A,\tau)  Q(t-\tau) x(t-\tau) d\tau
\end{equation}
where $x_0, \dot{x}_0 \in \mathbb{R}^n$ are initial conditions associated to the function $x$ and its derivative, respectively.

This can be directly proved by applying Laplace Transform. Moreover, it can be generalized for any order of derivation by the same procedure. However, for $\alpha \geq 2$, there is no condition for a stable constant matrix, namely $\{ \lambda \in \mathbb{C} \setminus \{0\}: |\arg (\lambda)| > \frac{\alpha \pi}{2} \} = \phi $ for $\alpha \geq 2$.

In addition, one can verify uniqueness of the solution (see for instance  \cite[Theorem 6.5]{Diethelm}, \cite[Theorem 3.25]{Kilbas} and the global version in \cite{Sin}) provided that $Q$ is a bounded function. Now we can formulate the following extension to \cite[Theorem 5]{C.Asym}, which is a generalization of \cite[Theorem 4]{FS}.

\begin{theorem}
Consider the following Caputo system
\begin{equation} \label{gen}
D^{\alpha} x(t) = (A + Q(t))x(t)  + \nu(t)
\end{equation}
with $0 < \alpha < 2$, $x \in \mathbb{R}^n$ and  $\nu : [0, \infty) \rightarrow \mathbb{R}^n$ a bounded continuous function ($ ||\nu||_\infty < \nu_0$). Let $x_0, \dot{x}_0 \in\mathbb{R}^n $ be  arbitrary initial conditions. Suppose that $Q : [0, \infty) \rightarrow \mathbb{R}^{n \times n}$ is a continuous matrix function such that $q = \sup_{t\geq 0} || \int_0 ^t \tau^{\alpha-1}  E_{\alpha, \alpha}(A,\tau)  [Q(t - \tau)]|| < 1$. Assume that $A \in \mathbb{R}^{n \times n}$ is a constant matrix such that $\sigma(A) \subset \Lambda^s_\alpha = \{ \lambda \in \mathbb{C} \setminus \{0\}: |\arg (\lambda)| > \frac{\alpha \pi}{2} \}$. Then,

(i) $x$ is bounded continuous function.

(ii) If in addition $ \lim_{t \rightarrow \infty} || \int_0 ^t \tau^{\alpha-1}  E_{\alpha, \alpha}(A,\tau)  \nu (t-\tau) d\tau || = 0$ then $\lim_{t \rightarrow \infty} ||x(t)||=0$. In particular, if $\nu$ converges to zero, then $\lim_{t \rightarrow \infty} ||x(t)||=0$.

\end{theorem}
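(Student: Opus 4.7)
The plan is to mimic the fixed-point argument used in the proof of Theorem 2.1 almost verbatim, with the analytic solution formula (\ref{alpha.gret}) replacing the one used in the $\alpha\in(0,1)$ case. Specifically, for $1\le\alpha<2$ I would define the modified Lyapunov--Perron operator
\begin{equation*}
\begin{aligned}
\mathcal{T}_{x_0,\dot{x}_0}(\xi)(t) := {} & E_\alpha(t^\alpha A) x_0 + t\, E_{\alpha,2}(t^\alpha A)\dot{x}_0 \\
 & + \int_0^t \tau^{\alpha-1} E_{\alpha,\alpha}(A,\tau) Q(t-\tau)\xi(t-\tau)\, d\tau \\
 & + \int_0^t \tau^{\alpha-1} E_{\alpha,\alpha}(A,\tau) \nu(t-\tau)\, d\tau,
\end{aligned}
\end{equation*}
whose fixed points coincide with the (unique) solutions of (\ref{gen}) by the cited results of Kilbas and Diethelm. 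The case $\alpha\in(0,1)$ is already covered by Theorem 2.1, so only the range $1\le\alpha<2$ actually needs new work.

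Next I would verify that $\mathcal{T}_{x_0,\dot{x}_0}$ maps $\mathcal{C}_\infty$ into itself and is a contraction with the same constant $q$ as in Theorem 2.1. Since the new term $t\,E_{\alpha,2}(t^\alpha A)\dot{x}_0$ does not depend on $\xi$, it plays no role in the contraction estimate: one obtains exactly
\begin{equation*}
\|\mathcal{T}_{x_0,\dot{x}_0}(\xi) - \mathcal{T}_{x_0,\dot{x}_0}(\tilde{\xi})\|_\infty \le q\,\|\xi-\tilde{\xi}\|_\infty,
\end{equation*}
so Banach's fixed-point theorem on $\mathcal{C}_\infty$ applies once we know the ``forcing'' part of $\mathcal{T}_{x_0,\dot{x}_0}$ lies in $\mathcal{C}_\infty$. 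This is where the spectral hypothesis enters: the bounds $\|E_\alpha(t^\alpha A)\|\le C$ and $\int_0^\infty\tau^{\alpha-1}\|E_{\alpha,\alpha}(A,\tau)\|d\tau<\infty$ for $\sigma(A)\subset\Lambda_\alpha^s$ were invoked in Theorem 2.1 via \cite[Theorem 3]{C.Asym}, and an entirely analogous estimate yields $\sup_{t\ge 0} t\,\|E_{\alpha,2}(t^\alpha A)\|<\infty$ for the additional term.

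For part (ii) I would copy the \emph{limsup} argument from the proof of Theorem 2.1 line by line, now incorporating the extra term. The only new fact required is that $t\,E_{\alpha,2}(t^\alpha A)\dot{x}_0\to 0$ as $t\to\infty$ when $\sigma(A)\subset\Lambda_\alpha^s$; together with $\|E_\alpha(t^\alpha A)x_0\|\to 0$ and the hypothesis on the $\nu$-convolution, this lets all non-$Q$ terms vanish in the limit and reproduces the same contradiction $\limsup\|x(t)\|<\limsup\|x(t)\|$ unless $\limsup\|x(t)\|=0$. The final ``in particular'' clause follows from \cite[Lemma 2]{FS} exactly as before, since $\tau^{\alpha-1}\|E_{\alpha,\alpha}(A,\tau)\|$ remains in $\mathcal{L}^1$ for $1\le\alpha<2$ under the same spectral hypothesis.

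The main obstacle I anticipate is precisely justifying the asymptotic decay and integrability of the Mittag--Leffler-type kernels in the regime $1\le\alpha<2$. For $\alpha\in(0,1)$ these estimates are directly available from \cite[Theorem 3]{C.Asym}; for $1\le\alpha<2$ one must either invoke or re-derive the analogous bounds for $E_\alpha$, $E_{\alpha,2}$, and $E_{\alpha,\alpha}$ on the sector $|\arg\lambda|>\alpha\pi/2$, which shrinks as $\alpha$ approaches $2$ and degenerates at $\alpha=2$. Once those asymptotic estimates are in hand, the rest of the argument is essentially a transcription of the proof of Theorem 2.1.
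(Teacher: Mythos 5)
Your proposal is correct and follows essentially the same route as the paper: the paper defines the identical Lyapunov--Perron operator $\mathcal{T}_{x_0,\dot x_0}$ with the extra term $t\,E_{\alpha,2}(t^\alpha A)\dot{x}_0$, obtains the same contraction constant $q$, and runs the same $\limsup$ contradiction for part (ii). The kernel estimates you flag as the main obstacle are exactly what the paper supplies by citation --- \cite[Lemma 3(i)]{Tuan} for the decay of the initial-condition terms and, for the $\mathcal{L}^1$ integrability of $\tau^{\alpha-1}E_{\alpha,\alpha}(A,\tau)$ when $1\le\alpha<2$, the BIBO-stability argument via \cite[Theorems 1, 2]{Malti} spelled out in the proof of Theorem 2.4(i).
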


\proof
(i) By applying Laplace Transform we conclude that the solution must hold
\begin{equation*}
x(t)= E_{\alpha} (t^{\alpha} A) x_0  + t E_{\alpha, 2} (t^{\alpha} A) \dot{x}_0 +  \int_0 ^t \tau^{\alpha-1}  E_{\alpha, \alpha}(A,\tau) [ Q(t-\tau) x(t-\tau) + \nu(t-\tau)] d\tau.
\end{equation*}

We define the Lyapunov-Perron operator by
\begin{equation*}
\begin{aligned}
\mathcal{T}_{x_0, \dot x_0} (\xi) (t) := & E_{\alpha} (t^{\alpha} A) x_0  + t E_{\alpha, 2} (t^{\alpha} A) \dot{x}_0  \\ & + \int_0 ^t \tau^{\alpha-1}  E_{\alpha, \alpha}(A,\tau) [ Q(t-\tau) \xi(t-\tau) + \nu(t-\tau)] d\tau.
\end{aligned}
\end{equation*}

Choose $\epsilon > 0$  arbitrary. By taking $\xi, \tilde{\xi} \in B_ \infty (0, \epsilon) = \{ \xi \in \mathcal{C}([0, \infty); \mathbb{R}^n) : ||\xi||_{\infty} \leq \epsilon \} $, we have
\begin{equation*}
||\mathcal{T}_{x_0, \dot x_0} (\xi) - \mathcal{T}_{x_0, \dot x_0} (\tilde{\xi}) ||_\infty \leq q ||\xi - \tilde{\xi}||_\infty.
\end{equation*}

Since $\sigma(A) \subset \Lambda^s_\alpha$, the initial conditions terms are bounded and decaying to zero. Since $\nu$ is bounded and $||\int_0 ^t \tau^{\alpha-1} E_{\alpha, \alpha}(A,\tau) d\tau ||< \infty$ by using   \cite[Theorem 3]{C.Asym} since $\sigma(A) \subset \Lambda^s_\alpha$ , we have $||\int_0 ^t \tau^{\alpha-1} E_{\alpha, \alpha}(A,\tau) \nu(t-\tau) d\tau)||< \infty$.  Hence,
\begin{equation*}
||\mathcal{T}_{x_0, \dot x_0} (\xi)||_\infty  < \infty.
\end{equation*}

Since $q < 1$, $\mathcal{T}_{x_0, \dot x_0} $ restricted to $\mathcal{C}_{\infty} =  \{ \xi \in \mathcal{C}([0, \infty); \mathbb{R}^n) : ||\xi||_{\infty} < \infty \} $ is a contraction auto map in a Banach space, whereby it has  a unique fixed point. Since the unique solution to  $D^{\alpha} x = (A + Q(t))x$ can be written as $x(t)= E_{\alpha} (t^{\alpha} A) x_0 + t E_{\alpha, 2} (t^{\alpha} A) \dot{x}_0 + \int_0 ^t \tau^{\alpha-1}  E_{\alpha, \alpha}(A,\tau)  [Q(t-\tau) \xi(t-\tau) + \nu(t-\tau) ] d\tau$ we conclude that $x=\xi$, whereby $x$ is a bounded continuous function and $\limsup_{t \rightarrow \infty} ||x(t)||< \infty$.

(ii) Define $\epsilon := \frac{ 1- q}{2q +1} \limsup_{t \rightarrow \infty} ||x(t)||$. Let $T$ be such that $\sup_{t \geq T} |x(t)| \leq \limsup_{t \rightarrow \infty} ||x(t)|| + \epsilon $, then
\begin{equation*}
\limsup_{t \rightarrow \infty} ||x(t)|| \leq (\limsup_{t \rightarrow \infty} ||x(t)|| + \epsilon) \sup_{t\geq 0} ||\int_T ^t \tau^{\alpha-1}  E_{\alpha, \alpha}(A,\tau)  Q(t-\tau) d\tau||
\end{equation*}
where we use that the condition $\sigma(A) \subset \Lambda^s_\alpha$ implies $\lim_{t \rightarrow \infty}  E_{\alpha} (t^{\alpha} A) x_0  = \lim_{t \rightarrow \infty}  t E_{\alpha, 2} (t^{\alpha} A) \dot{x}_0 = 0$ (see e.g. \cite[Lemma 3(i)]{Tuan}) and the hypothesis $\lim_{t \rightarrow \infty} ||\int_T ^t \tau^{\alpha-1}  E_{\alpha, \alpha}(A,\tau)  \nu(t-\tau) d\tau||=0$.

Therefore,
\begin{equation*}
\limsup_{t \rightarrow \infty} ||x(t)|| \leq  \limsup_{t \rightarrow \infty} ||x(t)|| \frac {2q + q^2}{2q +1} < \limsup_{t \rightarrow \infty} ||x(t)||
\end{equation*}
which is a contradiction if $\limsup_{t \rightarrow \infty} ||x(t)|| \neq 0$. Hence $\limsup_{t \rightarrow \infty} ||x(t)||=0$, and thus $\lim_{t \rightarrow \infty} ||x(t)|| = 0$.

The case when $\nu$ converges to zero is similar to the proof of Theorem 2.1.(ii)

\proofend

%\begin{remark}
%For the case $\alpha=1$, A trivial example occurs when $A$ is fractionally but not integer stable Suppose that $Q$ is such that $\sup_{t} |E_{\alpha, \alpha}(A,t) *_F Q| < 1$

%(By instability, it hod for any such  $Q$ )

%\end{remark}

More generally, one can extend Theorem 2.3 to other type of fractional derivatives provided that, by applying Laplace transform, one have  that (i) $\mathcal{L}[D^{\alpha} f](s)= s^{\alpha} f(s) - F(0,s)$ for any continuous function $f$ where $F(0,s)$ is an initial condition term depending on what type of fractional derivative is used. Whereby the implicit solution to (\ref{gen}) can be written in Laplace Domain as $x(s) = (s^{\alpha}- I)^{-1}F(0,s) + (s^{\alpha}- I)^{-1} [Q x](s) $ (ii) the term $\mathcal{L}^{-1} [F(0,s)](t)$ converges to zero provided that $\sigma(A) \subset \Lambda^s_\alpha = \{ \lambda \in \mathbb{C} \setminus \{0\}: |\arg (\lambda)| > \frac{\alpha \pi}{2} \}$ and (iii) uniqueness of the solution holds for system (\ref{gen}). For instance, Riemann-Liouville derivative satisfies the above three conditions under mild assumptions (see \cite[\S 2.8]{Podlubny} for (i), \cite[\S 3]{Kilbas} for (iii), meanwhile for (ii) one always can intersect the stable set of matrices with matrices $A$ such that $\sigma(A) \in \Lambda^s_\alpha $).

We now extend the first Lyapunov Theorem in its stability-instability part for $\alpha > 1$. It will appear as a direct consequence of a more general result. When $\alpha<1$, the stability was proved in \cite[Theorem 5]{C.Lin} and the instability in \cite[Theorem 8]{C.Instability}. This theorem also is more general that the result of  \cite[Theorem 4]{Tuan}, since it considers the unstable case.

Though the function $h(x)=Ax + f(x)$ (where $h(0)=0$) does not depend on $\dot x$, we remark that for $\alpha > 1$ the equilibrium point must consider this variable; it cannot be calculated by  just asking for those $x$ such that $h(x)=0$. By uniqueness and by a closed inspection of equation (\ref{alpha.gret}) with $Qx$ replaced by $f(x)$, we conclude that if $(x_0, \dot{x}_0)=(0,0)$ then the consistent solution is $x \equiv 0$, thus $(x^T, \dot{x}^T)^T=(0,0)^T$ is equilibrium point.

\begin{theorem}
Consider the following Caputo system with  $0<\alpha<2$
\begin{equation}\label{Syst.aut}
D^{\alpha} x = Ax + f(x)
\end{equation}
with $x: [0, \infty) \rightarrow \mathbb{R}^n $ and $f: [0, \infty) \rightarrow \mathbb{R}^n$ a locally Lipschitz function with Lipschitz  constant such that $\lim_{r \rightarrow 0^+} L(r) =0$

(i) If $\sigma(A) \subset \Lambda^s_\alpha =\{ \lambda \in \mathbb{C} \setminus \{0\}: |\arg (\lambda)| > \frac{\alpha \pi}{2} \}$ then $(x, \dot{x})^T=(0,0)^T$ is a locally asymptotically  stable equilibrium point.

(ii) If $A$ has at least one eigenvalue in $\Lambda^u_\alpha  = \{ \lambda \in \mathbb{C} \setminus \{0\}: |\arg (\lambda)| < \frac{\alpha \pi}{2} \}$ then $(x^T, \dot{x}^T)^T=(0,0)^T$ is an unstable equilibrium point.
\end{theorem}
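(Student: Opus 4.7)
The plan is to treat part (i) as a small perturbation fixed point argument in the style of Theorem 2.3, and to handle part (ii) by a Lyapunov--Perron stable manifold construction adapted from \cite[Theorem 4]{C.Manifold} and \cite[Theorem 8]{C.Instability} to the range $0<\alpha<2$ in which the initial data include both $x_0$ and $\dot x_0$.

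For part (i), I would set $C(\alpha,A):=\sup_{t\ge 0}\int_0^t \tau^{\alpha-1}\|E_{\alpha,\alpha}(A,\tau)\|\,d\tau$, which is finite by \cite[Theorem 3]{C.Asym} since $\sigma(A)\subset\Lambda^s_\alpha$, and choose $r>0$ small enough that $L(r)C(\alpha,A)<1$; this is possible by the hypothesis $\lim_{r\to 0^+}L(r)=0$. Since $f(0)=0$ (noted in the paragraph preceding the theorem) and $f$ is locally Lipschitz, the Lyapunov--Perron operator
\begin{equation*}
\mathcal{T}(\xi)(t):=E_\alpha(t^\alpha A)x_0+tE_{\alpha,2}(t^\alpha A)\dot x_0+\int_0^t\tau^{\alpha-1}E_{\alpha,\alpha}(A,\tau)f(\xi(t-\tau))\,d\tau
\end{equation*}
is a contraction on $B_\infty(0,r):=\{\xi\in\mathcal{C}([0,\infty);\mathbb{R}^n):\|\xi\|_\infty\le r\}$ with constant $L(r)C(\alpha,A)$. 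The hypothesis $\sigma(A)\subset\Lambda^s_\alpha$ and \cite[Lemma 3(i)]{Tuan} give a uniform bound $M$ such that $\|E_\alpha(t^\alpha A)x_0\|+\|tE_{\alpha,2}(t^\alpha A)\dot x_0\|\le M(\|x_0\|+\|\dot x_0\|)$ for all $t\ge 0$, so $\mathcal{T}$ is a self-map of $B_\infty(0,r)$ whenever $M(\|x_0\|+\|\dot x_0\|)\le r(1-L(r)C(\alpha,A))$. Banach's fixed point theorem yields a unique $\xi^*\in B_\infty(0,r)$, and by uniqueness of solutions (\cite[Theorem 6.5]{Diethelm}) $\xi^*=x$, whence stability. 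For asymptotic stability I would copy the $\limsup$ argument of Theorem 2.3(ii): writing $L^*:=\limsup_{t\to\infty}\|x(t)\|\le r$ and picking $T$ with $\sup_{t\ge T}\|x(t)\|\le L^*+\epsilon$, split the integral at $\tau=t-T$; the initial-condition terms vanish by \cite[Lemma 3(i)]{Tuan}, the head vanishes by integrability of $\tau^{\alpha-1}\|E_{\alpha,\alpha}(A,\tau)\|$ and boundedness of $f\circ x$, and the tail is bounded by $L(r)C(\alpha,A)(L^*+\epsilon)$, yielding $L^*=0$ since $L(r)C(\alpha,A)<1$.

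For part (ii), the plan is to build a Lipschitz stable manifold of positive codimension in the extended phase space $(x_0,\dot x_0)\in\mathbb{R}^{2n}$ and conclude instability from the existence of initial data off it. Decompose $\mathbb{C}^n=E^s\oplus E^u$ by the spectral projectors $\pi^s,\pi^u$ of $A$ across the sector boundary $|\arg\lambda|=\alpha\pi/2$, and denote $A^{s},A^{u}$ the block restrictions. Following \cite[Theorem 4]{C.Manifold}, I would set up a Lyapunov--Perron operator on a small ball in $\mathcal{C}_\infty$ whose stable component uses the forward Mittag--Leffler propagators $E_\alpha(t^\alpha A^s)$ and $tE_{\alpha,2}(t^\alpha A^s)$ applied to the two initial data $\pi^s x_0,\pi^s\dot x_0$, while its unstable component is replaced by a convergent improper integral on $[t,\infty)$ obtained from the resolvent of $A^u$; this is the standard trick that trades unbounded forward propagation for the bounded solutions one is parameterizing. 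Its fixed point describes the bounded orbits and exhibits them as the graph of a Lipschitz map from a neighbourhood of $0$ in $E^s\times E^s$ into $E^u\times E^u$. Any initial datum $(x_0,\dot x_0)$ off this graph can be chosen arbitrarily close to the origin; its unstable projection is nonzero, and since $E_\alpha(\lambda t^\alpha)$ grows unboundedly for $\lambda\in\sigma(A^u)\subset\Lambda^u_\alpha$ by the Mittag--Leffler asymptotics while the correction due to $f$ is $o(\|x\|)$ near the origin, the solution must leave every small ball $B(0,r)$, giving instability.

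The main obstacle is the extension of the stable manifold construction to $1<\alpha<2$. The implicit solution (\ref{alpha.gret}) now carries the extra term $tE_{\alpha,2}(t^\alpha A)\dot x_0$, so the Lyapunov--Perron fixed point must be posed over a pair of initial conditions, which doubles the dimension of the manifold and introduces a second matching condition on the unstable block $(A^u,A^u)$. Moreover, in this regime $|\arg\lambda|<\alpha\pi/2$ does not reduce to the positive-real-part condition, so the growth of $E_\alpha(t^\alpha A^u)$ and $tE_{\alpha,2}(t^\alpha A^u)$ on $E^u$, as well as the convergence of the resolvent integral, must be read off directly from the Mittag--Leffler asymptotics in \cite[Lemma 3(i)]{Tuan} rather than inherited by analogy with the integer-order case.
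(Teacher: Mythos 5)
Your part (i) follows the same contraction scheme as the paper, but it rests on a step you do not actually prove: the finiteness of $C(\alpha,A)=\sup_{t\ge 0}\int_0^t\tau^{\alpha-1}\|E_{\alpha,\alpha}(A,\tau)\|\,d\tau$ for $1<\alpha<2$. You cite \cite[Theorem 3]{C.Asym} for this, but that result is established for $\alpha\in(0,1)$; the whole point of extending the statement to $0<\alpha<2$ is that this integrability must be re-proved in the new range. The paper spends the first half of its proof of (i) on exactly this: it identifies $\tau^{\alpha-1}E_{\alpha,\alpha}(A,\tau)$ with $\mathcal{L}^{-1}[(s^{\alpha}I-A)^{-1}]$, invokes the BIBO-stability and $\mathcal{L}^1$ impulse-response criteria of \cite[Theorems 1, 2]{Malti} (valid for $\alpha>1$, using that $c^T(s^{\alpha}I-A)^{-1}b$ has positive relative degree), and then passes from matrix entries to a compatible matrix norm. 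Without that argument your contraction constant $L(r)C(\alpha,A)$ is not known to be finite, so part (i) is incomplete precisely in the regime the theorem is meant to cover. The rest of your part (i) (self-map estimate via \cite[Lemma 3(i)]{Tuan} and the $\limsup$ bootstrap) matches the paper.

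For part (ii) you take a genuinely different route: a stable-manifold construction in the extended phase space, concluding instability from initial data off the graph. The paper instead argues directly by contradiction: assuming stability, it passes to Jordan form, isolates a component with eigenvalue $\lambda_1\in\Lambda^u_\alpha$, and uses the Mittag--Leffler asymptotics of \cite[Lemmas 3, 4]{C.Instability} (deduced from \cite[Theorems 1.3, 1.4]{Podlubny} and hence valid for $0<\alpha<2$) to show that boundedness of $x^1$ against the divergence of $E_{\alpha}(\lambda_1 t^{\alpha})$ forces the identity (\ref{init.cond}) on $x^1_0$, after which a fixed-point estimate $\|x\|_\infty\le (K(\alpha,\lambda_1)+C(\alpha,A))\,l(\epsilon)\,\|x\|_\infty<\|x\|_\infty$ yields a contradiction for $x^1_0\ne 0$. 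Your route could work in principle, but you leave its hardest step unresolved: you yourself flag the Lyapunov--Perron manifold construction for $1<\alpha<2$ (doubled initial data, growth of $E_{\alpha}$ and $E_{\alpha,2}$ on the unstable block, convergence of the tail integral) as ``the main obstacle,'' and no cited result supplies it --- \cite[Theorem 4]{C.Manifold} is only for $\alpha\in(0,1)$. As written, part (ii) is a plan rather than a proof; the paper's direct contradiction avoids the manifold machinery entirely and needs only the two Podlubny-based lemmas, which do extend to $0<\alpha<2$.
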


\proof
(i) To carry out the arguments of Cong et al. for $\alpha>1$ one must prove first that $ \sup_{t\geq 0} || \int_0 ^t \tau^{\alpha-1}  E_{\alpha, \alpha}(A,\tau)|| d\tau $ is bounded for those $\alpha$ provided that $\sigma(A) \subset \Lambda^s_\alpha $. We state the following reasoning.

In the scalar case $\int |\tau^{\alpha-1} E_{\alpha,\alpha}(A,\tau)| d\tau < \infty$ is equivalent to determine if $|\tau^{\alpha-1} E_{\alpha,\alpha}(A,\tau)| \in \mathcal{L}^1$. Which is related to determine if the system is BIBO stable. \cite[Theorem 2]{Malti} states BIBO stability for $\alpha > 1$  by proving conditions to obtain impulse response $h$ in $\mathcal{L}^1$ (Note that $h \in  \mathcal{L}^1$ implies BIBO stability but the converse is not true).

We have $\tau^{\alpha-1} E_{\alpha,\alpha}(A,\tau) = \mathcal{L}^{-1} [(s^{\alpha}I-A)^{-1}]$. Since $\sigma(A) \subset \Lambda^s_\alpha$, the following transfer function is BIBO stable for any $c, b \in \mathbb{R}^n$ by \cite[Theorem 1]{Malti}
\begin{equation*}
h(t)= c^T \mathcal{L}^{-1} [(s^{\alpha}I-A)^{-1}](t) b.
\end{equation*}

Moreover by \cite[Theorem 2]{Malti} $h(t) \in \mathcal{L}^1$ since $\hat{h}= \mathcal{L}[h](s)$ has positive relative degree. In particular, choosing $c, b$ as $e_i, e_j$ elements of the canonical base of $\mathbb{R}^n$, it follows that every element of matrix   $\mathcal{L}^{-1} [(s^{\alpha}I-A)^{-1}]$ belongs to $\mathcal{L}^1$.

Since $||A||_1= \sup_{i,j} |a_{ij} | < \sum_{i,j =1}^n |a_{ij} | $, we have $||\tau^{\alpha-1} E_{\alpha,\alpha}(A,\tau)||_1  \in\mathcal{L}^1$ . Note that the chosen norm is equivalent to others norms such as the Frobenius norm and $||\cdot||_2$, whereby it is compatible with the standard norm on  $\mathbb{R}^n$.

Since $ \sup_{t\geq 0}  \int_0 ^t \tau^{\alpha-1} || E_{\alpha, \alpha}(A,\tau)|| d\tau < \infty$ and $||f(x)|| \leq L(r) ||x||$, we can choose a sufficiently small $r$ so that  $ \sup_{t\geq 0}  \int_0 ^t \tau^{\alpha-1} || E_{\alpha, \alpha}(A,\tau) L(r) \xi(t-\tau)|| d\tau < 1$ for $\xi \in B_ {\infty} (0, r)$, so the convergence proof follows on the same way as in Theorem 2.3.

(ii) Suppose that $x=0, \dot{x}=0$ is an stable point. Then for all $ \epsilon>0$ there exists $ \delta$ such that if $(x_0^T, \dot{x}_0^T)^T \in B_{\mathbb{R}^n}(0, \delta)$ then $(x^T, \dot{x}^T)^T \in B_{\mathbb{R}^n}(0,  \epsilon)$. In particular, if $(x_0^T, 0^T)^T \in B_{\mathbb{R}^n}(0, \delta)$ then $x \in B_{\mathbb{R}^n}(0, \epsilon)$. We prove that this last case cannot occur, whereby $x=0, \dot{x}=0$ is an unstable point.

By a Jordan canonical transformation and without loss of generality, we can suppose $A$ a diagonal matrix of eigenvalues with its first simple eigenvalue $\lambda_1$ belonging to $\Lambda^u_\alpha $. Therefore, the solution to (\ref{Syst.aut}) with initial condition $(x(0)^T, \dot{x}(0)^T)^T=(x^1_0,0, \ldots, 0)^T$ can be expressed for the first component as
\begin{equation*}
x^1(t)= E_{\alpha}(\lambda_1 t^{\alpha})x^1_0 +  \int_0 ^t \tau^{\alpha-1}  E_{\alpha, \alpha}(A,\tau) f^1(x(t-\tau)) d\tau.
\end{equation*}
(if all the unstable eigenvalue are repeated, we use the not null initial condition in the equation corresponding to the last equation in (\ref{recursive}) obtained also by a canonical Jordan transformation, with $\tilde{\nu} \equiv f^1$).

By \cite[Lemma 4]{C.Instability} -- a result directly deduced from \cite[Theorems 1.3, 1.4]{Podlubny} which has validity for $0<\alpha<2$ -- we have
\begin{equation*}
\begin{aligned}
\lim_{t\rightarrow \infty}  & \int_0 ^t \frac{\tau^{\alpha-1} E_{\alpha, \alpha}(\lambda \tau^\alpha) f^1(x(t-\tau))}{E_{\alpha}(\lambda_1 t^{\alpha})}  d\tau = \lambda_1 ^{\frac{1-\alpha}{\alpha}}  \int_0 ^t \exp(-\lambda_1 ^{\frac{1-\alpha}{\alpha}} \tau) f^1(x(t-\tau))d\tau.
\end{aligned}
\end{equation*}

Since $E_{\alpha}(\lambda_1 t^{\alpha})$ diverges and $||x^1||_{\infty} < \epsilon$ by our assumption of stability, necessarily
\begin{equation}\label{init.cond}
x^1_0 = - \lambda_1 ^{\frac{1-\alpha}{\alpha}}  \int_0 ^t \exp(-\lambda_1 ^{\frac{1-\alpha}{\alpha}} \tau) f^1(x(t-\tau))d\tau.
\end{equation}

Defining,
\begin{equation*}
\begin{aligned}
\mathcal{T \xi}^1 :=  & \int_0 ^t \tau^{\alpha-1} E_{\alpha, \alpha}(\lambda \tau^\alpha) h^1(\xi(t-\tau)) d\tau \\ & - \lambda_1 ^{\frac{1-\alpha}{\alpha}}  E_{\alpha}(\lambda_i t^{\alpha})\int_0 ^t \exp(-\lambda_1 ^{\frac{1-\alpha}{\alpha}} \tau) h^1(\xi(t-\tau))d\tau
\end{aligned}
\end{equation*}
and $i=2, \ldots, n$
\begin{equation*}
\mathcal{T \xi}^i :=  \int_0 ^t \tau^{\alpha-1} E_{\alpha, \alpha}(\lambda \tau^\alpha) f^i(\xi(t-\tau)) d\tau
\end{equation*}
we observe that the solution to (\ref{Syst.aut}) with initial condition $(x(0)^T, \dot{x}(0)^T)=(x^1_0,0, \ldots, 0)^T$ must be a fixed point of $\mathcal{T}$.

Using, \cite[Lemma 3]{C.Instability} -- a result directly deduced from \cite[Theorems 1.3, 1.4]{Podlubny} which has validity for $0<\alpha<2$ -- ,  we have
\begin{equation*}
||\mathcal{T \xi}^1 || \leq K(\alpha, \lambda_1) ||f||_\infty
\end{equation*}
and using part (i), we have for $i=2, \ldots, n$
\begin{equation*}
||\mathcal{T \xi}^i || \leq C(\alpha, A) ||f||_\infty.
\end{equation*}

Choosing $\epsilon$ such that $(K(\alpha, \lambda_1) + C(\alpha, A)) l(\epsilon) <1$, we have
\begin{equation*}
||x||_\infty = ||\mathcal{T}x||_\infty  \leq C(\alpha, A) l(\epsilon) ||x||_\infty
\end{equation*}
which is a contradiction for $x^1_0 \neq 0$. Then $(x^T, \dot{x}^T)^T=(0,0)^T$ is an unstable point.

\proofend

\begin{remark}
(i) First Lyapunov Theorem appears by Taylor expansion around the origin, namely $f(x) = J_f(0) \> x + p(x(t))$ and noting that $p$ is locally Lipschitz around the origin with Lipschitz constant such that $\lim_{r \rightarrow 0^+} L(r) =0$. If the Jacobian of $f$ at zero $J_f(x=0)=J_f(0)$ holds that $\sigma(J_f(0)) \subset \Lambda^s_\alpha $ then local asymptotic stability of the origin $(x, \dot{x})=(0,0)$ follows and instability if at least one eigenvalue belongs to $\Lambda^u_\alpha $. We recall that even for integer order systems, the stability cannot be asserted from this local analysis if  there exist eigenvalues such that $\{ \lambda \in \mathbb{C} \setminus \{0\}: |\arg (\lambda)| = \frac{\alpha \pi}{2} \}$. Note also that instability does not necessarily imply that the solution diverges as is shown in \cite[Proposition 6]{C.Perron}, where  is proved that there exists initial conditions (in our case, holding equation (\ref{init.cond})) such that the solution remains bounded for unstable systems.

(ii) From the proof, we conclude that if $\sigma(A) \subset \Lambda^s_\alpha $, then  $\tau^{\alpha-1} E_{\alpha,\alpha}(A,\tau)$ is $\mathcal{L}^1$. Thus, the same  statement of Corollary 2.1 can be extended to $\alpha \geq 1$ because the proof is similar.

\end{remark}

\subsection{Extensions for piece-wise continuous functions}
\textbf{    }

\textbf{    }

$Q(t)$ in Cong et al. argumentation is always required to be a continuous function. We extend the results for the case when $Q(t)$ is only piece-wise continuous, namely one can define intervals $[T_i, T_{i+1})$ for $i \in \mathbb{N}$ such that $Q(t)$ is continuous in $[T_i, T_{i+1})$. This problem is relevant in switching systems, for instance $Q(t)$ could take  stable constant matrix values in $[T_i, T_{i+1})$.

\begin{theorem}
Consider the following Caputo system
\begin{equation} \label{pwc}
D^{\alpha} x(t) = (A + Q(t))x(t) + \nu(t)
\end{equation}
where $0<\alpha < 2$. Let $x_0, \dot{x}_0 \in\mathbb{R}^n $ be  arbitrary initial conditions and $\nu : [0, \infty) \rightarrow \mathbb{R}^n$ a bounded continuous function ($ ||\nu||_\infty < \nu_0$). Suppose that $Q : [0, \infty) \rightarrow \mathbb{R}^{n \times n}$ is a piece-wise bounded continuous matrix function such that $q = \sup_{t\geq 0} || \int_0 ^t \tau^{\alpha-1}  E_{\alpha, \alpha}(A,\tau)  Q(t - \tau)|| d\tau< 1$. Assume that $A \in \mathbb{R}^{n \times n}$ is such that $\sigma(A) \subset \Lambda^s_\alpha = \{ \lambda \in \mathbb{C} \setminus \{0\}: |\arg (\lambda)| > \frac{\alpha \pi}{2} \}$. Then, $x$ is a bounded continuous function. If in addition $ \lim_{t \rightarrow \infty} || \int_0 ^t \tau^{\alpha-1}  E_{\alpha, \alpha}(A,\tau)  \nu (t-\tau) d\tau || = 0$, then $\lim_{t \rightarrow \infty} ||x(t)||=0$. In particular, if $\nu$ converges to zero, then $\lim_{t \rightarrow \infty} ||x(t)||=0$.
\end{theorem}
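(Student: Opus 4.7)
The plan is to repeat the Lyapunov--Perron fixed point argument of Theorem 2.3 in the same Banach space $\mathcal{C}_{\infty}=\{\xi\in\mathcal{C}([0,\infty);\mathbb{R}^n):\|\xi\|_\infty<\infty\}$, with the operator
\begin{equation*}
\mathcal{T}_{x_0,\dot x_0}(\xi)(t):=E_\alpha(t^\alpha A)x_0+tE_{\alpha,2}(t^\alpha A)\dot x_0+\int_0^t\tau^{\alpha-1}E_{\alpha,\alpha}(A,\tau)\bigl[Q(t-\tau)\xi(t-\tau)+\nu(t-\tau)\bigr]d\tau.
\end{equation*}
Three items need re-checking in the piece-wise continuous setting: (a) the integral equation still characterizes the unique solution $x$; (b) $\mathcal{T}_{x_0,\dot x_0}$ is a well-defined self map of $\mathcal{C}_\infty$, i.e.\ $\mathcal{T}_{x_0,\dot x_0}(\xi)$ is continuous in $t$ despite the jumps of $Q$; (c) the contraction estimate in $\|\cdot\|_\infty$ remains valid.

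The core technical step is (b). For fixed $\xi\in\mathcal{C}_\infty$, the integrand $\tau\mapsto\tau^{\alpha-1}E_{\alpha,\alpha}(A,\tau)Q(t-\tau)\xi(t-\tau)$ is dominated in norm by $\|Q\|_\infty\|\xi\|_\infty\,\tau^{\alpha-1}\|E_{\alpha,\alpha}(A,\tau)\|$, which lies in $\mathcal{L}^1$ by \cite[Theorem 3]{C.Asym} when $\alpha<1$ and by the argument in the proof of Theorem 2.4(i) when $1\leq\alpha<2$. For any $t_n\to t$, the integrand (extended by $\chi_{[0,t_n]}$) converges pointwise at every $\tau$ at which $Q(t-\cdot)$ is continuous, excluding only a countable, hence Lebesgue null, set. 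Dominated convergence then yields continuity of the integral term in $t$; the same argument applied with $\nu$ (which is continuous) handles the inhomogeneous term. Boundedness of $\mathcal{T}_{x_0,\dot x_0}(\xi)$ follows as in Theorem 2.3: the initial-condition terms decay because $\sigma(A)\subset\Lambda^s_\alpha$, and the $\nu$ and $Q\xi$ integrals are controlled by $\int_0^t\tau^{\alpha-1}\|E_{\alpha,\alpha}(A,\tau)\|d\tau<\infty$.

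For (a), uniqueness is obtained by solving the integral equation piece by piece on each continuity interval $[T_i,T_{i+1})$ of $Q$: once $x$ is determined on $[0,T_i)$ the remaining equation on $[T_i,T_{i+1})$ is a Volterra equation whose kernel is continuous on that interval and whose memory contribution from $[0,T_i)$ is a continuous forcing term, so a unique continuous extension exists; iterating yields a unique continuous global solution, and the Laplace-transform derivation of the integral formula of Theorem 2.3 carries over since it only relied on local integrability of $Q$. For (c), the contraction estimate $\|\mathcal{T}_{x_0,\dot x_0}(\xi)-\mathcal{T}_{x_0,\dot x_0}(\tilde\xi)\|_\infty\leq q\|\xi-\tilde\xi\|_\infty$ is unaffected by pointwise discontinuities of $Q$, since $q$ is defined through an integral quantity. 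Banach's theorem then produces a unique fixed point in $\mathcal{C}_\infty$, which by uniqueness coincides with $x$, giving boundedness and continuity of $x$.

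The convergence claim is proved by the same $\limsup$ manipulation as in Theorem 2.3(ii): define $\epsilon:=\frac{1-q}{2q+1}\limsup_{t\to\infty}\|x(t)\|$ and $T$ so that $\sup_{t\geq T}\|x(t)\|\leq\limsup_{t\to\infty}\|x(t)\|+\epsilon$, split the $Q\xi$ integral at $T$, use the decay of the initial-condition terms and the hypothesis on the $\nu$-integral, and derive the same contradiction unless $\limsup_{t\to\infty}\|x(t)\|=0$. The particular case $\nu\to 0$ reduces to the $\nu$-integral hypothesis by \cite[Lemma 2]{FS} applied to $\tau^{\alpha-1}\|E_{\alpha,\alpha}(A,\tau)\|\in\mathcal{L}^1$ and bounded $\nu$, exactly as in the continuous case. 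The main obstacle is strictly the continuity preservation in step (b); the remainder is a direct transcription of the proofs of Theorems 2.3 and 2.4(i).
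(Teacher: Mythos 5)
Your proposal is correct and follows essentially the same route as the paper: establish the Mittag-Leffler integral representation via Laplace transform, prove uniqueness by recursing over the continuity intervals of $Q$, and then run the Theorem 2.3 contraction and $\limsup$ machinery unchanged. Your explicit dominated-convergence verification that the operator maps $\mathcal{C}_\infty$ into continuous functions despite the jumps of $Q$ is a detail the paper leaves implicit, and is a worthwhile addition rather than a different method.
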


\proof
To apply Cong et al's methodology, one must prove when $Q(t)$ is piece-wise continuous that the solution to (\ref{pwc}) (i) has an analytic-implicit solution in terms of $E_\alpha$ functions to form the Lyapunov Perron-Operator and that (ii) this solution is the unique continuous function satisfying the given initial condition.

Part (i) follows by applying Laplace transform, since Laplace transform is still valid for piece continuous functions. The resultant expression is the same as before i.e.
\begin{equation}
x(t)= E_{\alpha} (t^{\alpha} A) x_0 + t E_{\alpha, 2} (t^{\alpha} A) \dot{x}_0 + \int_0 ^t (t-\tau)^{\alpha-1}  E_{\alpha, \alpha}(A,t-\tau) [ Q(\tau) x(\tau) + \nu(\tau)] d\tau.
\end{equation}

For (ii), first we note that equation (\ref{pwc}) with $x_0=0, \dot{x}_0=0$ has a unique solution $x \equiv 0$ in the first continuity period $[T_0, T_{1})$ since it is indistinguishable of a linear system for which it is known uniqueness (\cite[Theorem 3.25]{Kilbas}) and  $x \equiv 0$ satisfies both, the initial condition and the equation for Caputo derivative. Then one has $x(T_1^-)=0, \dot{x}_0(T_1^-)=0$ where $T_1$ is the end of the first continuity period, which gives a solution $x \equiv 0$ in the second continuity period. Recursively one has a solution $x \equiv 0$ to the equation (\ref{pwc}).

A contraction map argument on the operator
\begin{equation*}
\mathcal{T}_{x_0, \dot x_0} (\xi) (t) := \int_{T_i} ^{T_{i+1}} \tau^{\alpha-1}  E_{\alpha, \alpha}(A,\tau)  Q(t-\tau) \xi(t-\tau) d\tau
\end{equation*}
proves that the solution $x \equiv 0$ is unique in $\mathcal{C} [T_i,s)$ for all $i \in \mathbb{N}$ where $s$ is such that $||\int_{T_i} ^{s} \tau^{\alpha-1}  E_{\alpha, \alpha}(A,\tau)  Q(t-\tau) d\tau|| < 1$ and  we have dropped the  associated  term $\nu $ since it is canceled in a contraction map argument. Recursively, one can extend for the total interval $\mathcal{C} [T_i,T_{i+1})$.

Suppose now  that there exists $x \neq y$ in $\mathcal{C} [0,T)$
 with $x(0) = y(0)$ such that $D^\alpha x = (A + Q(t)) x$ and $D^\alpha y = (A + Q(t)) y$.

Therefore, $D^\alpha (x-y) = (A + Q(t)) (x-y)$ with $(x-y)(0)=0$, which gives as solution $x-y \equiv 0$. We conclude that the assumption $x-y \neq 0$ is a contradiction and thus, the solution is unique. The rest of the proof is identical to the proof of Theorem 2.3

\proofend

\subsection{Robustness for autonomous systems}
\textbf{    }

\textbf{    }

We extend \cite[Theorem 5]{C.Lin} by considering the autonomous systems $D^{\alpha} x =f(x)$ slightly perturbed around the equilibrium point $x=0$ in its Taylor expansion. We use the concepts of locally ultimately bound equilibrium point and locally attractive equilibrium point defined in \cite[\S 3.1]{FLT} meaning that the solutions starting near of the equilibrium point remain bounded or converge to the equilibrium point, respectively, as time goes to infinity.

\begin{theorem}
Consider the following Caputo system with $0 < \alpha < 2$
\begin{equation}
D^{\alpha} x(t) = (A + Q(t)) x(t) + f(x(t)) + \nu(t)
\end{equation}
with $x(t) \in \mathbb{R}^n$ for all $t \geq 0$, $\nu : [0, \infty) \rightarrow \mathbb{R}^n$ a bounded Lipschitz continuous function ($ ||\nu||_\infty < \nu_0$) and $f$ locally Lipschitz continuous around the origin with Lipschitz constant such that $\lim_{r \rightarrow 0^+} L(r) =0$ for the neighborhood around the origin  $B(0,r) \subset \mathbb{R}^n$ and such that $f(0)=0$. Suppose that $Q : [0, \infty) \rightarrow \mathbb{R}^{n \times n}$ is a bounded continuous matrix function  with
\begin{equation}\label{Q1}
q := \sup_{t\geq 0} || \int_0 ^t \tau^{\alpha-1}  E_{\alpha, \alpha}(A,\tau)  Q(t - \tau) d\tau || < 1
\end{equation}
and that $A \in \mathbb{R}^{n \times n}$ is a constant matrix such that
\begin{equation}
\sigma(A) \subset \Lambda^s_\alpha = \{ \lambda \in \mathbb{C} \setminus \{0\}: |\arg (\lambda)| > \frac{\alpha \pi}{2} \}
\end{equation}

Then,

(i) $x=0, \dot{x}=0$ is a locally ultimately bound equilibrium point.

(ii) If in addition $ \lim_{t \rightarrow \infty} || \int_0 ^t \tau^{\alpha-1}  E_{\alpha, \alpha}(A,\tau)  \nu (t-\tau) d\tau || = 0$ then $\lim_{t \rightarrow \infty} ||x(t)||=0$. In particular, if $\nu$ converges to zero, then $x=0,  \dot{x}=0$ is an attractive point equilibrium point.

\end{theorem}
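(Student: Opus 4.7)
The plan is to combine the Lyapunov-Perron fixed-point machinery used in the proof of Theorem 2.5 with the observation, exploited in Theorem 2.4, that the Lipschitz constant $L(r)$ of $f$ can be made arbitrarily small by restricting the radius of the ball in which the solution is sought. A key ingredient already established in the proof of Theorem 2.4 (i) is that $C(\alpha, A) := \sup_{t \geq 0} \int_0^t \tau^{\alpha-1} \|E_{\alpha, \alpha}(A, \tau)\| \, d\tau < \infty$ for $0 < \alpha < 2$ whenever $\sigma(A) \subset \Lambda^s_\alpha$, so that $f$ and $\nu$ can both be treated as bounded perturbations with tractable integral norms.

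First I would define the Lyapunov-Perron operator
\[
\mathcal{T}_{x_0, \dot x_0}(\xi)(t) := E_{\alpha}(t^{\alpha} A) x_0 + t E_{\alpha, 2}(t^{\alpha} A) \dot{x}_0 + \int_0^t \tau^{\alpha-1} E_{\alpha, \alpha}(A, \tau) \bigl[ Q(t-\tau) \xi(t-\tau) + f(\xi(t-\tau)) + \nu(t-\tau) \bigr] d\tau,
\]
and work on the closed ball $B_\infty(0, r_0) \subset \mathcal{C}([0, \infty); \mathbb{R}^n)$ with $r_0 \leq r$. The Lipschitz estimate $\|f(\xi)-f(\tilde\xi)\|_\infty \leq L(r_0)\|\xi-\tilde\xi\|_\infty$, together with hypothesis (\ref{Q1}), yields the contraction bound
\[
\|\mathcal{T}_{x_0, \dot x_0}(\xi) - \mathcal{T}_{x_0, \dot x_0}(\tilde\xi)\|_\infty \leq \bigl( q + L(r_0) C(\alpha, A) \bigr) \|\xi - \tilde\xi\|_\infty.
\]
Choosing $r_0$ so small that $L(r_0) C(\alpha, A) \leq (1-q)/2$ gives a contraction constant $\tilde q := (1+q)/2 < 1$. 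Since $\sigma(A) \subset \Lambda^s_\alpha$ makes the initial-condition terms bounded by some $M(\|x_0\|, \|\dot x_0\|) \to 0$ as $(x_0, \dot x_0) \to 0$, and since the $\nu$-integral is bounded uniformly by $C(\alpha, A) \nu_0$, the self-map condition $\mathcal{T}_{x_0, \dot x_0}(B_\infty(0, r_0)) \subset B_\infty(0, r_0)$ holds as soon as $(x_0, \dot x_0)$ and $\nu_0$ are sufficiently small relative to $r_0$; this is what determines the neighborhood in the definition of locally ultimately bounded. The Banach fixed-point theorem then produces a unique continuous bounded fixed point which, by the explicit solution representation already used in the proof of Theorem 2.5, agrees with the unique solution of the Caputo initial-value problem, establishing (i).

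For part (ii), with the boundedness from (i) in hand I would reproduce the $\limsup$-contradiction argument from the proof of Theorem 2.5 (ii): set $\epsilon := \frac{1-\tilde q}{2\tilde q + 1} \limsup_{t \to \infty} \|x(t)\|$ and pick $T$ with $\sup_{t \geq T} \|x(t)\| \leq \limsup_{t \to \infty} \|x(t)\| + \epsilon$. The initial-condition terms vanish at infinity by \cite[Lemma 3(i)]{Tuan}, the $\nu$-integral vanishes by hypothesis (or, when $\nu \to 0$, via \cite[Lemma 2]{FS} exactly as at the end of the proof of Theorem 2.1), and the new $f$-contribution is bounded above by $L(r_0) C(\alpha, A)$ times $(\limsup_t \|x(t)\| + \epsilon)$, which is at most $(1-q)/2$ times that quantity. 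Adding this to the $Q$-contribution handled as in Theorem 2.5 (ii) yields $\limsup_t \|x(t)\| \leq \tilde q (\limsup_t \|x(t)\| + \epsilon) < \limsup_t \|x(t)\|$, a contradiction unless $\limsup_t \|x(t)\| = 0$, proving $x(t) \to 0$ and hence attractivity of the origin when $\nu \to 0$.

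The main technical obstacle is the joint calibration of three small parameters: the ball radius $r_0$ (small so that $L(r_0) C(\alpha, A)$ consumes only a fraction of the contraction budget $1-q$), the initial data $(x_0, \dot x_0)$, and the disturbance bound $\nu_0$, both of which must be chosen sufficiently small relative to the already fixed $r_0$ to guarantee the self-map condition. A secondary subtlety is verifying that the fixed point produced in $B_\infty(0, r_0)$ is genuinely \emph{the} unique continuous solution of the Caputo system on $[0, \infty)$; this reduces to invoking local-in-time uniqueness for Lipschitz right-hand sides (e.g.\ \cite[Theorem 3.25]{Kilbas}) and then using the a priori bound supplied by the fixed point to rule out finite-time blow-up and extend uniqueness globally.
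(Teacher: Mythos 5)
Your proposal is correct and follows essentially the same route as the paper: the Lyapunov--Perron operator augmented by the $f$- and $\nu$-integrals, the contraction constant $q + L(\cdot)\,C(\alpha,A)$ made less than one by shrinking the ball radius (using $\lim_{r\to 0^+}L(r)=0$ and $C(\alpha,A)<\infty$), and then the $\limsup$-contradiction of Theorem~2.1/2.3 for part (ii). The one substantive divergence is where you close the fixed-point argument: you insist on the self-map condition $\mathcal{T}(B_\infty(0,r_0))\subset B_\infty(0,r_0)$, which forces you to add the requirement that $\nu_0$ and $(x_0,\dot x_0)$ be small relative to $r_0$; the paper instead shows only $\mathcal{T}(B_\infty(0,\epsilon))\subset B_\infty(0,C(\nu_0))$ and declares the contraction on all of $\mathcal{C}_\infty$, with no smallness assumption on $\nu_0$. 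Your version is the more defensible one, since the Lipschitz estimate $\|f(\xi)-f(\hat\xi)\|\le L(\epsilon)\|\xi-\hat\xi\|$ is only available inside the small ball and the paper's contraction claim on the whole of $\mathcal{C}_\infty$ is not justified for a merely locally Lipschitz $f$; the price is that you prove a formally weaker statement (boundedness only for sufficiently small disturbances and initial data), which is in fact what the local hypotheses genuinely support. Your closing remarks on calibrating $r_0$, the initial data and $\nu_0$, and on matching the fixed point with the unique solution of the Caputo problem, correctly identify the points the paper passes over.
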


\proof
We make the case $0<\alpha < 1$. As we have seen, the analysis of the  case $1 \leq \alpha < 2$ is similar. Define the Lyapunov-Perron operator as
\begin{equation*}
\begin{aligned}
\mathcal{T}_{x_0} (\xi) (t) := &  E_{\alpha} (t^{\alpha} A) x_0 + \int_0 ^t \tau^{\alpha-1}  E_{\alpha, \alpha}(A,\tau)  Q(t-\tau) \xi(t-\tau) d\tau  \\
      & + \int_0 ^t \tau^{\alpha-1}  E_{\alpha, \alpha}(A,\tau)  f(\xi(t-\tau)) d\tau  + \int_0 ^t \tau^{\alpha-1}  E_{\alpha, \alpha}(A,\tau)  \nu (t-\tau) d\tau.
\end{aligned}
\end{equation*}

Let $\epsilon > 0$ be a number to be defined later. By taking $\xi, \hat{\xi} (t) \in B_ {\infty} (0, \epsilon) = \{ \xi \in \mathcal{C}([0, \infty); \mathbb{R}^n) : ||\xi||_{\infty} \leq \epsilon \} $, and using that $\sigma(A) \subset \Lambda^s_{\alpha}$ we have $E_{\alpha} (t^{\alpha} A) x_0 $ is bounded and   $\int_0 ^t \tau^{\alpha-1} ||E_{\alpha, \alpha}(\tau ^{\alpha} A)|| d\tau = C(\alpha,A) < \infty$ (\cite[Theorem 3]{C.Asym}) and, by Lipschitz assumption, $||f(\xi) - f(\hat{\xi})||_\infty \leq L(\epsilon) ||\xi -\hat{\xi}||_\infty $, we have
\begin{equation*}
||\mathcal{T}_{x_0} \xi  - \mathcal{T}_{x_0} \hat{\xi} ||_\infty \leq q || \xi - \hat{\xi}||_\infty + C(\alpha,A) L(\epsilon) || \xi - \hat{\xi}||_\infty.
\end{equation*}

By taking $\hat{\xi} = 0$ and since $\sigma(A) \subset \Lambda^s_\alpha$, we get
\begin{equation*}
||\mathcal{T}_{x_0} (\xi) ||_\infty \leq C(\nu_0) + q \epsilon +  C(\alpha,A)L(\epsilon) \epsilon < \infty,
\end{equation*}
where $C(\nu_0)$ is a constant term depending on the bound $\nu_0$. Thus, $\mathcal{T}_{x_0} (B_ {\infty} (0, \epsilon)) \subset B_ {\infty} (0, C(\nu_0))$.

Since $\lim_{r \rightarrow 0^+} L(r) =0$, we can choose a sufficiently small $\epsilon < 1$ so that $\mu = q + C(\alpha,A)  L(\epsilon)  < 1$. Thus we conclude that
\begin{equation*}
||\mathcal{T}_{x_0} (\xi) - \mathcal{T}_{x_0} (\hat{\xi}) ||_\infty \leq \mu ||\xi - \hat{\xi}||_\infty.
\end{equation*}

Therefore, $\mathcal{T}_{x_0} $ restricted to the set $\mathcal{C}_{\infty} =  \{ \xi \in \mathcal{C}([0, \infty); \mathbb{R}^n) : ||\xi||_{\infty} < \infty \} $ is a contraction auto map in a Banach space,  whereby it has  a unique fixed point. Since the unique solution to  $D^{\alpha} x = F(t, x):= (A + Q(t))x + f(x(t)) + \nu(t)$ (the uniqueness follows from Lipschitz continuity of $F$ in the second variable) can be written as $x(t)= E_{\alpha} (t^{\alpha} A) x_0 + \int_0 ^t \tau^{\alpha-1}  E_{\alpha, \alpha}(A,\tau)  [Q(t-\tau) \xi(t-\tau) + f(x(t-\tau)) + \nu (t-\tau)] d\tau$, we conclude that $x=\xi$ when $x_0 \in B_{\mathbb{R}^n}(0,\epsilon)$, whereby $x$ is bounded continuous function since $\xi \in \mathcal{C}_{\infty}$.

(ii) Let  $x_0 \in B_{\mathbb{R}^n}(0,\epsilon)$. We have
\begin{equation*}
\begin{aligned}
x(t) = &   E_{\alpha}(A,t) x(0) + \int_0 ^t \tau^{\alpha-1}  E_{\alpha, \alpha}(A,\tau)  Q(t-\tau) \xi(t-\tau) d\tau  \\
      &  + \int_0 ^t \tau^{\alpha-1}  E_{\alpha, \alpha}(A,\tau)  f(\xi(t-\tau)) d\tau + \int_0 ^t \tau^{\alpha-1}  E_{\alpha, \alpha}(A,\tau)  \nu(t-\tau) d\tau
\end{aligned}
\end{equation*}
and taking norm we get
\begin{equation*}
\begin{aligned}
||x(t)|| \leq &   ||E_{\alpha}(A,t) x(0)|| + ||\int_0 ^t \tau^{\alpha-1}  E_{\alpha, \alpha}(A,\tau)  Q(t-\tau) \xi(t-\tau) d\tau||  \\
      &  + ||\int_0 ^t \tau^{\alpha-1}  E_{\alpha, \alpha}(A,\tau)  f(\xi(t-\tau)) d\tau||  + ||\int_0 ^t \tau^{\alpha-1}  E_{\alpha, \alpha}(A,\tau) \nu(t-\tau) d\tau||.
\end{aligned}
\end{equation*}

Define $\delta := \frac{ 1- \mu}{2\mu +1} \limsup_{t \rightarrow \infty} ||x(t)||$ (by part (i), we have that $\delta < \infty$) and $T$ such that $\sup_{t \geq T} ||x(t)|| \leq \limsup_{t \rightarrow \infty} ||x(t)|| + \delta $ (by part (i), such $T$ always exists since $x$ is bounded), then
\begin{equation*}
\begin{aligned}
\limsup_{t \rightarrow \infty} ||x(t)|| \leq & (\limsup_{t \rightarrow \infty} ||x(t)|| + \delta) (\sup_{t\geq 0} ||\int_T ^t \tau^{\alpha-1}  E_{\alpha, \alpha}(A,\tau)  [Q](t-\tau) d\tau|| \\ & + L(\epsilon) ||\int_0 ^t \tau^{\alpha-1}  E_{\alpha, \alpha}(A,\tau)  d\tau||)
\end{aligned}
\end{equation*}
where we used the hypothesis $ \lim_{t \rightarrow \infty} || \int_0 ^t \tau^{\alpha-1}  E_{\alpha, \alpha}(A,\tau)  \nu (t-\tau) d\tau || = 0$ and that since $\sigma(A) \subset \Lambda^s_\alpha$,  $ \lim_{t \rightarrow \infty} E_{\alpha} (t^{\alpha} A)=0$ (see e.g. \cite[Theorem 3]{C.Asym})).

Therefore,
\begin{equation*}
\limsup_{t \rightarrow \infty} ||x(t)|| \leq  \limsup_{t \rightarrow \infty} ||x(t)|| \frac {2\mu + \mu^2}{2\mu +1} < \limsup_{t \rightarrow \infty} ||x(t)||
\end{equation*}
which is a contradiction if $\limsup_{t \rightarrow \infty} ||x(t)|| \neq 0$. Hence, $\limsup_{t \rightarrow \infty} ||x(t)||=0$, whereby $\lim_{t \rightarrow \infty} ||x(t)|| = 0$.

Finally, note that
\begin{equation*}
0 \leq ||\int_0 ^t \tau^{\alpha-1}  E_{\alpha, \alpha}(A,\tau)  \nu(t-\tau) d\tau|| \leq \int_0 ^t \tau^{\alpha-1}  || E_{\alpha, \alpha}(A,\tau)|| || \nu(t-\tau)|| d\tau
\end{equation*}

Since $\sup_{t\geq 0}  \int_0 ^t  \tau^{\alpha-1} ||  E_{\alpha, \alpha}(A,\tau) || d\tau  = C(\alpha, A) < \infty$ (\cite[Theorem 3]{C.Asym}), we have $\tau^{\alpha-1} || E_{\alpha, \alpha}(A,\tau)|| \in \mathcal{L}^1$. Since $\nu$ is bounded and converges to zero, we can apply \cite[Lemma 2]{FS}, to conclude that $\int_0 ^t \tau^{\alpha-1}  || E_{\alpha, \alpha}(A,\tau)|| || \nu(t-\tau)|| d\tau =0$, and the claim follows.

\proofend

\begin{remark}
(i) By Corollary 2.1, instead of condition (\ref{Q1}) one can use $\sup_{t \geq T} ||Q(t)|| < \frac{1}{C(\alpha, A)}$ for some $T \geq 0$ or that  $\lim_{t \rightarrow \infty} ||Q(t)||=0$.

\end{remark}

From the above theorem, we state the following result

\begin{corollary}

Consider the following Caputo system
\begin{equation}
D^{\alpha} x = f(x,t)
\end{equation}
where $f(x,t)$ is a differentiable function with $f(0,\cdot)\equiv0$ and $\lim_{t \rightarrow \infty}  f(x,t) = g(x)$ with $g(0)=0$ and  the Jacobian of $g$ satisfying
\begin{equation*}
\sigma(J_g(0)) \subset \Lambda^s_\alpha = \{ \lambda \in \mathbb{C} \setminus \{0\}: |\arg (\lambda)| > \frac{\alpha \pi}{2} \}
\end{equation*}

Then $x=0$ is locally asymptotically stable.
\end{corollary}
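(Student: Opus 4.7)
The plan is to reduce the corollary to Theorem 2.5 via a Taylor expansion of $g$ at the origin. Set $A := J_g(0)$; since $g$ is differentiable at $0$ with $g(0)=0$, I write $g(x) = Ax + g_1(x)$, where $g_1(x) := g(x) - Ax$ is locally Lipschitz in a neighborhood of the origin with Lipschitz constant $L(r) \to 0$ as $r \to 0^+$, in the same spirit as Remark 2.3(i). The hypothesis on the Jacobian gives exactly $\sigma(A) \subset \Lambda^s_\alpha$, which is the stability condition required by Theorem 2.5.

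Next, I would rewrite the dynamics as
\begin{equation*}
D^{\alpha} x(t) = A x(t) + g_1(x(t)) + \nu(t), \qquad \nu(t) := f(x(t),t) - g(x(t)),
\end{equation*}
and view this as an instance of the setup of Theorem 2.5 with $Q \equiv 0$ (so condition (\ref{Q1}) is trivial) and the role of ``$f$'' in that theorem played by $g_1$. Part (i) of Theorem 2.5 then yields local ultimate boundedness of $(x,\dot x)$: for any $x_0$ in a small enough ball $B(0,\epsilon)$ the solution $x(t)$ remains bounded and in fact confined to a small neighborhood of the origin. With this a priori bound in hand, the pointwise convergence $f(x,t) \to g(x)$, upgraded to uniformity on the compact closure of the trajectory via continuity and differentiability of $f$, forces $\|\nu(t)\| \to 0$ as $t \to \infty$. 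Invoking part (ii) of Theorem 2.5, together with the $\mathcal{L}^1$-integrability of $\tau^{\alpha-1} E_{\alpha,\alpha}(A,\tau)$ and \cite[Lemma~2]{FS}, yields $\lim_{t\to\infty} \|x(t)\| = 0$. Combined with the smallness of the trajectory guaranteed by (i), this is local asymptotic stability of $x=0$.

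The main obstacle is the circular character of the definition of $\nu$: it becomes a pure function of $t$ only after the unknown $x(t)$ has been fixed, so one cannot naively quote Theorem 2.5 as a black box. I would resolve this by working inside the Banach-space contraction argument underlying Theorem 2.5 itself: operate on the ball $B_\infty(0,\epsilon)$ of continuous functions $\xi$, on which $\|f(\xi(t),t) - g(\xi(t))\|$ is uniformly bounded and, by the uniform-on-compacta convergence of $f(\cdot,t)$ to $g(\cdot)$, vanishes as $t\to\infty$ uniformly in $\xi\in B_\infty(0,\epsilon)$; then extract a fixed point in this ball and apply the same limsup estimate as in the proof of Theorem 2.5(ii) to obtain decay. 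This sidesteps the self-reference and delivers the conclusion.
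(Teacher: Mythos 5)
Your proposal follows essentially the same route as the paper: a Taylor expansion of $f$ around $x=0$ with linear part $A=J_g(0)$ and a time-dependent remainder that vanishes as $t\to\infty$, followed by an application of part (ii) of the robustness theorem for autonomous systems (which is Theorem~2.6 in the paper's numbering, not 2.5). The only differences are bookkeeping --- the paper splits the vanishing discrepancy into a linear piece $Q(t)x$ with $Q(t)\to 0$ plus a higher-order remainder, whereas you absorb everything into $\nu$ with $Q\equiv 0$ --- and your explicit treatment of the circular dependence of $\nu$ on the unknown solution (by rerunning the contraction argument on $B_\infty(0,\epsilon)$) makes precise a step the paper's one-line proof leaves implicit.
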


\proof
Using that $\lim_{t \rightarrow \infty}  f(x,t) = g(x)$ and that $f(0,\cdot)\equiv0$, the local expansion around $x=0$ of $f$ must hold for any $t>0$ that $f(x, t)= (J_g(0) + Q(t)) x + q(x(t)) + \nu(x(t))$ where $Q, \nu$ vanishes asymptotically and $q$ is such that $g(x)=J_g(0) x + q(x)$ for a sufficiently small $x$. The claim follows from applying Theorem 2.6.(ii)
\proofend

\begin{remark}
(i) A trivial example of such $f$ is $f(x,t)= (A+Q(t))x$ with $A$ satisfying the assumption of Theorem 2.6 and $\lim_{ t \rightarrow \infty} Q(t)=0$. Another example is $f(x,t)= f(x) + u(t)g(x)$ when $u(t) \rightarrow 0$, and the Jacobian of $f$ holds condition (2.13). Similarly, if  $f(x,t)= f(x) + u(t)g(x)$ with $u(t)g(x) \rightarrow h(x)$, $f(0)=0$, $g(0)=0$ assuming the Jacobian of $f$ holds condition (2.13) and $h$ is locally Lipschitz around the origin with Lipschitz constant such that $\lim_{r \rightarrow 0^+} L(r) =0$ for the neighborhood around the origin  $B(0,r)$. In fact, since locally around $x=0$, $f(x)= A x + p(x)$, $u(t)g(x)=  u(t)Qx + u(t)p_g(x)$ we have locally around $x=0$ that $f(x,t)= (A + u(t)Q) + u(t)p_g(x) + p$ where $p$ is locally Lipschitz around the origin with Lipschitz constant such that $\lim_{r \rightarrow 0^+} L(r) =0$. Then $x=0$ is locally asymptotically stable point.

(ii) Actually, with the aid of the proof of Theorem 2.2, one just requires that $\lim_{t \rightarrow \infty}  f(x,t) = g(x,t)$ such that $g(x,t) \leq (A + Q(t))$ with $A$ satisfying condition (2.13) and $Q$  satisfying condition (2.12).
\end{remark}

%But more generally with our extension, for adequate $\nu$, $A(t) \rightarrow A_0$ fractionally stable, $Q(t) \rightarrow 0$

%$f(x,t)= (A(t)+Q(t))x + \nu(t)$

\subsection{Mixed order systems}
\textbf{    }

\textbf{    }

Common to all developments of Cong et al. is that the same order of derivation is used for each component of vector function $x$ in its defining equation. This represents a serious modeling limitation. We present the generalization when different orders of derivation for each component of vector $x$ are allowed.

\begin{theorem}
Consider $\alpha_i > 0$ for $i = 1 , \ldots, n$. Define $D^{\alpha} x(t)$ a  vector of components $D^{\alpha_i} x_i(t)$ where $x(t) \in \mathbb{R}^n$ for all $t \geq 0$. Consider the following Caputo system,
\begin{equation}\label{S}
D^{\alpha} x(t) = A x(t)+ Q(t)x(t) + f(x(t)) + \nu(t)
\end{equation}
where $\nu : [0, \infty) \rightarrow \mathbb{R}^n$ is a bounded  continuous function ($ ||\nu||_\infty < \nu_0$) and $f$ is locally Lipschitz continuous around the origin with Lipschitz constant such that $\lim_{r \rightarrow 0^+} L(r) =0$ for the neighborhood around the origin  $B(0,r)$ and such that $f(0)=0$. Suppose $Q : [0, \infty) \rightarrow \mathbb{R}^{n \times n}$ is a continuous matrix . Let $x_0, \dot{x}_0 \in\mathbb{R}^n $ be arbitrary initial conditions at the initial time of the Caputo derivative. Suppose that $A \in \mathbb{R}^{n \times n}$ is a constant matrix such that
\begin{equation}\label{ww}
\sigma(A) \subset \{ \lambda \in \mathbb{C} \setminus \{0\}: |\arg (\lambda)| > \frac{\alpha_M \pi}{2} \}
\end{equation}
where $\alpha_M= \max_i \{ \alpha_i \}$.

(i) Suppose $f(x) \equiv 0, Q \equiv 0$. Then, $x$ is a bounded continuous function and if $\nu$ converges to zero, $x$ converges to zero.

(ii) Suppose $\nu \equiv 0, Q \equiv 0$. Then, $x$ converges to zero.

(iii) Suppose $f(x) \equiv 0$ and $Q$ such that $q := \sup_{t\geq 0} || \int_0 ^t \tau^{\alpha-1}  E_{\alpha, \alpha}(\tau)  Q(t - \tau) d\tau || < 1$ where matrix $E_{\alpha, \alpha}(\tau)$ is a diagonal matrix with  $(E_{\alpha, \alpha}(t))_{ii}:=E_{\alpha_i, \alpha_i}(\lambda_i t^{\alpha_i}) $ for $\lambda_i \in \sigma(A)$.  If in addition $ \lim_{t \rightarrow \infty} || \int_0 ^t \tau^{\alpha-1}  E_{\alpha, \alpha}(A,\tau)  \nu (t-\tau) d\tau || = 0$ then $\lim_{t \rightarrow \infty} ||x(t)||=0$. In particular, if $\nu$ converges to zero, then $\lim_{t \rightarrow \infty} ||x(t)||=0$.

%Suppose that $Q : [0, \infty) \rightarrow \mathbb{R}^{n \times n}$ is a continuous matrix function such that $\lim_{t \rightarrow \infty} Q(t)=0$  %for every $\lambda \in \sigma(A)$
%\begin{equation}\label{z}
%q := \sup_{t\geq 0} || \int_0 ^t \tau^{\alpha-1}  E_{\alpha, \alpha}(A,\tau)  Q(t - \tau) d\tau ||_1 < 1
%\end{equation}
%Then,

%(i) $x$ is bounded.

%(ii) If in addition $ \lim_{t \rightarrow \infty} || \int_0 ^t \tau^{\alpha-1}  E_{\alpha, \alpha}(A,\tau)  \nu (t-\tau) d\tau || = 0$ then $\lim_{t \rightarrow \infty} ||x(t)||=0$. In particular, if $\nu$ converges to zero, then $\lim_{t \rightarrow \infty} ||x(t)||=0$.
.
\end{theorem}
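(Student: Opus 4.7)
The plan is to reduce the mixed-order system to a system of scalar integral equations via a Jordan canonical transformation of $A$, and then apply the Lyapunov--Perron fixed point technique component-wise, in analogy with Theorem~2.3 and Theorem~2.6, but keeping careful track of the individual orders $\alpha_i$. The crucial observation is that since $\alpha_i \leq \alpha_M$, we have $\Lambda^s_{\alpha_M} \subset \Lambda^s_{\alpha_i}$ for every $i$, so each eigenvalue of $A$ lies in every individual stability sector. Consequently, by \cite[Theorem~3]{C.Asym} together with the $\mathcal{L}^1$ argument of Theorem~2.4(ii), each kernel $\tau^{\alpha_i-1} E_{\alpha_i,\alpha_i}(\lambda_i \tau^{\alpha_i})$ is integrable and the terms $E_{\alpha_i}(\lambda_i t^{\alpha_i})$ and $t\,E_{\alpha_i,2}(\lambda_i t^{\alpha_i})$ decay to zero.

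First I would bring $A$ to Jordan form $P^{-1}AP=J$ and treat the generic diagonal case (Jordan blocks being handled recursively as in Theorem~2.4(ii)). Applying the Laplace transform component-wise yields the implicit representation
\begin{equation*}
x_i(t) = E_{\alpha_i}(\lambda_i t^{\alpha_i})\, x_{0,i} + \chi_i\, t\,E_{\alpha_i,2}(\lambda_i t^{\alpha_i})\, \dot x_{0,i} + \int_0^t \tau^{\alpha_i-1} E_{\alpha_i,\alpha_i}(\lambda_i \tau^{\alpha_i})\, g_i(t-\tau)\, d\tau,
\end{equation*}
where $g(t) := Q(t)x(t) + f(x(t)) + \nu(t)$ and $\chi_i = 1$ if $\alpha_i>1$ and $0$ otherwise. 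Stacking these identities in vector form produces exactly the Lyapunov--Perron operator $\mathcal{T}_{x_0,\dot x_0}$ whose integral kernel is the diagonal matrix $E_{\alpha,\alpha}(\tau)$ defined in the statement of the theorem.

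Then for each of the three parts I would show that $\mathcal{T}$ is a contraction on a suitable subset of $\mathcal{C}_\infty$ in the sup norm. For part~(iii) the contraction modulus is $q<1$ by hypothesis, so the Banach fixed point argument of Theorem~2.3 applies verbatim: the unique fixed point $x$ is bounded and continuous, and the limsup argument combined with $\lim_t \|\int_0^t \tau^{\alpha-1} E_{\alpha,\alpha}(A,\tau)\nu(t-\tau)\,d\tau\|=0$ and the decay of the initial-condition terms force $\lim_t \|x(t)\|=0$. Part~(i) is the special case $Q\equiv 0$ of~(iii). For part~(ii), the Lipschitz smallness $\lim_{r\to 0^+}L(r)=0$ together with the boundedness of $C(\alpha,A):=\sup_t \int_0^t \tau^{\alpha-1}\|E_{\alpha,\alpha}(A,\tau)\|\,d\tau$ (interpreted component-wise via the diagonal kernel) allow one to pick $\epsilon$ so small that $C(\alpha,A)\,L(\epsilon)<1$, yielding contraction on $B_\infty(0,\epsilon)$ and local convergence exactly as in Theorem~2.6.

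The main obstacle is the book-keeping in the similarity transformation: the change of variables $y=P^{-1}x$ conjugates $Q(t)$ to $P^{-1}Q(t)P$ and requires post-multiplication by $P$ when returning to the original coordinates, so one must verify that the hypothesis $q<1$ stated with the diagonal $E_{\alpha,\alpha}$ truly delivers a contraction in the original variables (a norm-compatibility argument, absorbing $\|P\|\|P^{-1}\|$ into the constant, suffices). The non-diagonalizable case also requires resolving the triangular couplings within each Jordan block by the recursive procedure of Theorem~2.4(ii). Finally, one should note that the requirement $\Lambda^s_{\alpha_M} \neq \emptyset$ implicitly restricts $\alpha_M < 2$, so the Laplace-transform based representation used here stays within the regime already validated by Theorems~2.3 and~2.5.
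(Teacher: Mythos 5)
Your overall strategy---Jordan reduction, component-wise Laplace representation with the diagonal kernel $E_{\alpha,\alpha}(\tau)$, and a Lyapunov--Perron contraction followed by the $\limsup$ argument---is the same as the paper's, and your observation that $\alpha_i\le\alpha_M$ places every eigenvalue in every sector $\Lambda^s_{\alpha_i}$ is exactly the point the paper exploits. The gap is in how you treat the non-diagonalizable case. You propose to resolve the triangular couplings inside each Jordan block ``by the recursive procedure,'' i.e.\ solving the last equation of the block first and substituting upward. That works in part~(i), where the paper does precisely this (the coupling is strictly triangular and the only extra term is the exogenous forcing $\tilde\nu$, so $z_{d_\lambda}$ satisfies a closed scalar equation). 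It does \emph{not} work in parts~(ii) and~(iii): there the extra terms are $\tilde f(z)$ and $\tilde Q(t)z$, which depend on \emph{all} components of $z$, so the last equation of the block is no longer closed and the recursion cannot start. If instead you fold the nilpotent part $N$ of the Jordan form into the perturbation of your vector-valued operator, the contraction modulus acquires a term $C(\alpha,A)\,\|N\|$ with $\|N\|=1$ in the standard Jordan basis, and there is no reason for this to be less than $1$. The paper's fix, which your proposal is missing, is the rescaling $P_j=\mathrm{diag}(1,\gamma,\ldots,\gamma^{d_j-1})$ of the Jordan basis, which replaces the off-diagonal $1$'s by $\gamma$ and lets one choose $\gamma$ so that $C(\alpha,A)\,\|N\|=1/2$ (part~(ii)) or $C(\alpha,A)\,\|N\|+q<1$ (part~(iii)); only then does the Banach fixed-point argument close.

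A secondary point: you correctly flag that conjugation turns $Q$ into $P^{-1}QP$ and suggest ``absorbing $\|P\|\|P^{-1}\|$ into the constant,'' but the hypothesis of part~(iii) is the sharp bound $q<1$, and multiplying by a condition number $\ge 1$ can destroy it; this needs more care than a blanket absorption (the paper itself is terse here, asserting an equality of norms that deserves justification). Also, reducing part~(i) to ``the special case $Q\equiv0$ of~(iii)'' loses the boundedness claim for merely bounded $\nu$, which the paper obtains directly from the recursive scalar argument via the Perron-type result; you should state that the contraction step itself already yields boundedness before any convergence hypothesis on $\nu$ is invoked.
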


\begin{proof}
(i) Consider a non singular constant matrix $T$ transforming $A$ in its Jordan normal form, i.e.  $T^{-1} A T = diag (A_1, \ldots, A_r)$  where $A_i= \lambda_i I_{d_i \times d_i} + \eta_i N_{d_i \times d_i}$ for $i=1, \ldots, r$ with $I_{d_i \times d_i}$ is the identity matrix of dimension ${d_i \times d_i}$, $\lambda_i \in \sigma(A)$,  $\eta_i \in \{ 0, 1 \}$, $\eta_i=0$ if eigenvalue  $\lambda_i$ is simple and $1$ otherwise, $d_i$ is the multiplicity of $\lambda_i$, $\sum_i d_i=n$ and
\begin{equation*}
 N_{d_i \times d_i} := \begin{bmatrix}
0  & 1    & 0      & \ldots       & 0   \\
0  & 0 & 1 & \ldots &   0 \\
\vdots & \vdots & \ddots & \ddots & \vdots     \\
0  &  0      & \ldots & 0 & 1 \\
0 & 0      &  \ldots      & 0     & 0
\end{bmatrix}_{d_i \times d_i}
\end{equation*}
(See \cite[\S 3.2]{C.Perron}).

Since the transformation $T$ is non singular and linear, system (\ref{S}) is equivalent to the system $T^{-1} D^{\alpha}Tz =D^{\alpha}z=  diag (A_1, \ldots, A_d)z +   T^{-1}Q(t)T z + T^{-1}\nu(t)  $ if $z:=T^{-1}x$, meaning that boundedness and convergence to zero of variables $x, z$ are preserved under such linear non singular transformation. We define $\tilde{\nu}(t) :=T^{-1}\nu(t)$.

The system in $z$ is decoupled in $r$ independent subsystems, each of them  having the following recursive structure. For generic eigenvalue $\lambda$ with multiplicity $d_\lambda$, we have
\begin{equation} \label{recursive}
\begin{cases}
D^{\alpha_1}z_1(t)= \lambda z_1(t) + \eta z_2(t) +  \tilde{\nu}_1(t) \\
D^{\alpha_2}z_2(t)= \lambda z_2(t) + \eta z_3(t) +  \tilde{\nu}_2(t) \\
\ldots \\
D^{\alpha_{d_\lambda -1}}z_{d_\lambda -1}(t)= \lambda z_{d_\lambda -1}(t) + \eta z_{d_\lambda}(t) +  \tilde{\nu}_{d_\lambda -1}(t) \\
D^{\alpha_{d_\lambda}}z_{d_\lambda}(t)= \lambda z_{d_\lambda}(t) +   \tilde{\nu}_{d_\lambda}(t).
\end{cases}
\end{equation}

Since $|\arg (\lambda)| > \frac{\alpha_m \pi}{2} \geq \frac{\alpha_i \pi}{2} $ for any $i=1, \ldots, n$, we can apply \cite[Proposition 5]{C.Perron} (or Theorem 2.1 for the scalar case) to  the last equation of (\ref{recursive}) whereby $z_{d_\lambda}$ is bounded and if $\tilde{\nu}$ converges to zero $z_{d_\lambda}$ converges to zero. For the equation in $z_{d_\lambda -1}$, since $z_{d_\lambda}$ is bounded, the same arguments prove that $z_{d_\lambda-1}$ is bounded and if $\tilde{\nu}$ converges to zero then $z_{d_\lambda}$  converges to zero and therefore $z_{d_\lambda-1}$ converge to zero. Recursively, one get that $z$ and hence $x$, are bounded continuous function and if  $\tilde{\nu}$ converges to zero then $z$ and hence $x$ converge to zero.

(ii)
For simplicity, suppose $0< \alpha_i \leq 1$. Define $P_j:= diag (1, \gamma, \ldots, \gamma^{d_j-1})$ for $j = 1, \ldots, r$ with $\gamma>0$ a constant number to be precised. Hence, by part (i) $P_j^{-1}A_j P_j = \lambda_j I_{d_j \times d_j} + \gamma_j N_{d_j \times d_j}$, and therefore $(TP)^{-1}A TP=  diag (\lambda_1 id_{d_1 \times d_1}, \ldots , \lambda_r id_{d_r \times d_r}) + diag (\gamma_1 N_{d_1 \times d_1}, \ldots , \gamma_r N_{d_r \times d_r}) $ with $\gamma_j \in \{ 0, \gamma \}$. Hence, under the transformation $y:=(TP)^{-1}z$, system (\ref{S}) will be equivalent to $D^{\alpha_i}z_{i}(t)= \lambda_i z_{i}(t) + \gamma_i z_{i+1}+ \tilde{f}_i(z(t)) $ for $i=1, \ldots, n$ where $\tilde{f} (z) :=(TP)^{-1}f(TPy)$. Note that eigenvalues with multiplicity $d$ appear repeated $d$ in the list $\lambda_1, \ldots, \lambda_n$ . Therefore, we can express the system as  $z_i(t)= E_{\alpha_i}(\lambda_i t^{\alpha_i})z_i(0) + \int_0 ^t \tau^{\alpha-1}  E_{\alpha_i, \alpha_i}(\lambda_i\tau^{\alpha_i}) [ \gamma_i z_{i+1} +  \tilde{f}_{i}(t-\tau)]  d\tau$.

Defining the diagonal matrices, $(E_{\alpha}(t))_{ii}:=E_{\alpha_i}(\lambda_i t^{\alpha_i}) $, $(E_{\alpha, \alpha}(t))_{ii}:=E_{\alpha_i, \alpha_i}(\lambda_i t^{\alpha_i})$, $(\tau^{\alpha-1})_{ii}:=\tau^{\alpha_i-1}$ and building matrix $N$ from blocks $ \gamma_i N_{d_i \times d_i}$ in the diagonal ($N$ is a constant matrix), system (\ref{S}) can be written in vector form as
\begin{equation}
z(t)= E_{\alpha}(t)z(0) +  \int_0 ^t \tau^{\alpha-1}  E_{\alpha, \alpha}(\tau) [N z(t-\tau) + \tilde{f}(z(t-\tau))]  d\tau.
\end{equation}

By hypothesis $ E_{\alpha}(t)$ converges to zero and $||t^{\alpha-1}  E_{\alpha, \alpha}(t)||_1 \in \mathcal{L}^1$ since $|\arg (\lambda_i)| > \frac{\alpha_m \pi}{2} \geq \frac{\alpha_i \pi}{2} $,  $t^{\alpha_i-1}  (E_{\alpha, \alpha})_{ii}(t)\in \mathcal{L}^1$ for $i=1, \ldots, n$.  Define for vector $\alpha$, $C(\alpha, A) := \sup_{t \geq 0} ||\int_0^t \tau^{\alpha-1}  E_{\alpha, \alpha}(\tau) d\tau || < \infty$. Define Lyapunov-Perron operator as
\begin{equation*}
\mathcal{T}_{x_0} (\xi) (t) := E_{\alpha}(t)\xi(0) +  \int_0 ^t \tau^{\alpha-1}  E_{\alpha, \alpha}(\tau) [N \xi(t-\tau) + \tilde{f}( \xi(t-\tau)) ]  d\tau.
\end{equation*}

Let $\epsilon > 0$ a number to be defined later. Consider $\xi, \hat{\xi} (t) \in B_ {\infty} (0, \epsilon) = \{ \xi \in \mathcal{C}([0, \infty); \mathbb{C}^n) : ||\xi||_{\infty} \leq \epsilon \} $. Fix number $\gamma$  such that that $||N|| C(\alpha, A)=1/2$. Using that $\int_0 ^t \tau^{\alpha-1} ||E_{\alpha, \alpha}(\tau ^{\alpha} A)|| d\tau < \infty$ and, by Lipschitz assumption, $||f(\xi) - f(\hat{\xi})||_\infty \leq L(\epsilon) ||\xi -\hat{\xi}||_\infty $, we have
\begin{equation*}
||\mathcal{T}_{x_0} \xi  - \mathcal{T}_{x_0} \hat{\xi} ||_\infty \leq C(\alpha, A) ||N|| || \xi - \hat{\xi}||_\infty + C(\alpha, A) L(\epsilon) || \xi - \hat{\xi}||_\infty.
\end{equation*}

Hence, by taking $\hat{\xi} = 0$, we have
\begin{equation*}
||\mathcal{T}_{x_0} (\xi) ||_\infty \leq C(\alpha, A)||N|| \epsilon +  C(\alpha, A) L(\epsilon) \epsilon := C < \infty
\end{equation*}
and thus, $\mathcal{T}_{x_0} (B_ {{\infty}} (0, \epsilon)) \subset B_ {\infty} (0, C)$. Since $\lim_{r \rightarrow 0^+} L(r) =0$, we can choose a sufficiently small $\epsilon >0 $ so that $\mu =  C(\alpha, A) ||N|| +  C(\alpha, A) L(\epsilon)= 1/2  +  C(\alpha, A) L(\epsilon) < 1$. Thus we conclude that
\begin{equation*}
||\mathcal{T}_{x_0} (\xi) - \mathcal{T}_{x_0} (\hat{\xi}) ||_\infty \leq \mu ||\xi - \hat{\xi}||_\infty
\end{equation*}

Therefore, $\mathcal{T}_{x_0} $ restricted to the set $\mathcal{C}_{\infty} =  \{ \xi \in \mathcal{C}([0, \infty); \mathbb{C}^n) : ||\xi||_{\infty} < \infty \} $ is a contraction auto map in a Banach space,  whereby it has  a unique fixed point. Since the unique solution to  $D^{\alpha} z = F(t, z):= Az + f(z(t))$ (the uniqueness follows from Lipschitz continuity of $F$ in the second variable, along the same line of arguments of \cite[Theorem 6.5]{Diethelm} or alternatively, using directly  \cite[Theorem 6.5]{Diethelm} since under Jordan canonical form, the system is decomposed in recursive scalar equation) can be written as $z(t)= E_{\alpha} (t^{\alpha} A) z_0 + \int_0 ^t \tau^{\alpha-1}  E_{\alpha, \alpha}(\tau)  f(z(t-\tau)) d\tau$, we conclude that $x=\xi$ when $x_0 \in B_{\mathbb{C}^n}(0,\epsilon)$, whereby $z$ is bounded continuous function since $\xi \in \mathcal{C}_{\infty}$.

Define $\delta := \frac{ 1- \mu}{2\mu +1} \limsup_{t \rightarrow \infty} ||x(t)||$ (by part (i), we have that $\delta < \infty$) and $T$ such that $\sup_{t \geq T} ||x(t)|| \leq \limsup_{t \rightarrow \infty} ||x(t)|| + \delta $ (by part (i), such $T$ always exists since $x$ is bounded), then
\begin{equation*}
\begin{aligned}
\limsup_{t \rightarrow \infty} ||x(t)|| \leq & (\limsup_{t \rightarrow \infty} ||x(t)|| + \delta) (\sup_{t\geq 0} ||\int_T ^t \tau^{\alpha-1}  E_{\alpha, \alpha}(\tau)  N  d\tau|| \\ & + L(\epsilon) ||\int_0 ^t \tau^{\alpha-1}  E_{\alpha, \alpha}(A,\tau)  d\tau|| )
\end{aligned}
\end{equation*}
and thus,
\begin{equation*}
\limsup_{t \rightarrow \infty} ||x(t)|| \leq  \limsup_{t \rightarrow \infty} ||x(t)|| \frac {2\mu + \mu^2}{2\mu +1} < \limsup_{t \rightarrow \infty} ||x(t)||
\end{equation*}
which is a contradiction if $\limsup_{t \rightarrow \infty} ||x(t)|| \neq 0$. Hence, $\limsup_{t \rightarrow \infty} ||x(t)||=0$, whereby $\lim_{t \rightarrow \infty} ||x(t)|| = 0$.

The proof for $0< \alpha_i < 2$ is similar using the ideas of the proof of Theorem 2.4.

(iii) For simplicity, suppose $0< \alpha_i < 1$. By using the transformation of part (ii), system (\ref{S})  will be equivalent to $D^{\alpha_i}z_{i}(t)= \lambda_i z_{i}(t) + \gamma_i z_{i+1}(t) + \sum_{j} \tilde{Q}_{ij}(t) z_j +   \tilde{\nu}_{i}(t)$ for $i=1, \ldots, n$ where $\tilde{Q}(t) :=(TP)^{-1}Q(t)TP$. Or  $z_i(t)= E_{\alpha_i}(\lambda_i t^{\alpha_i})z_i(0) + \int_0 ^t \tau^{\alpha-1}  E_{\alpha_i, \alpha_i}(\lambda_i\tau^{\alpha_i}) [\gamma_i z_{i}(t) + \sum_{j} \tilde{Q}_{ij}(t-\tau) z_j (t-\tau) + \tilde{\nu}_{i}(t-\tau)]  d\tau$.

Defining the diagonal matrices, $(E_{\alpha}(t))_{ii}:=E_{\alpha_i}(\lambda_i t^{\alpha_i}) $, $(E_{\alpha, \alpha}(t))_{ii}:=E_{\alpha_i, \alpha_i}(\lambda_i t^{\alpha_i}) $, the solution can be written in vector form as
\begin{equation}
z(t)= E_{\alpha}(t)z(0) +  \int_0 ^t \tau^{\alpha-1}  E_{\alpha, \alpha}(\tau) [Nz(t-\tau) + \tilde{Q}(t-\tau) z(t-\tau) + \tilde{\nu}(t-\tau)]  d\tau.
\end{equation}

Define Lyapunov-Perron  operator as
\begin{equation*}
\mathcal{T}_{x_0} (\xi) (t) := E_{\alpha}(t)\xi(0) +  \int_0 ^t \tau^{\alpha-1}  E_{\alpha, \alpha}(\tau) [N \xi(t-\tau) + \tilde{Q}( \xi(t-\tau)) + \nu(t-\tau))]  d\tau.
\end{equation*}

Let $\epsilon > 0$ a number to be defined later. Consider $\xi, \hat{\xi} (t) \in B_ {\infty} (0, \epsilon) = \{ \xi \in \mathcal{C}([0, \infty); \mathbb{C}^n) : ||\xi||_{\infty} \leq \epsilon \} $. Fix number $\gamma$  such that that $||N|| C(\alpha, A) + q < 1 $. Using that $\int_0 ^t \tau^{\alpha-1} ||E_{\alpha, \alpha}(\tau ^{\alpha} A)|| d\tau = C(\alpha, A) < \infty$, we have
\begin{equation*}
||\mathcal{T}_{x_0} \xi  - \mathcal{T}_{x_0} \hat{\xi} ||_\infty \leq C(\alpha, A) ||N|| || \xi - \hat{\xi}||_\infty + q || \xi - \hat{\xi}||_\infty
\end{equation*}
where we use that $||\int_0 ^t \tau^{\alpha-1}  E_{\alpha, \alpha}(\tau) [(TP)^{-1}Q(t-\tau)TP( \xi(t-\tau)]  d\tau|| = ||\int_0 ^t \tau^{\alpha-1}  E_{\alpha, \alpha}(\tau) [Q(t-\tau)( \xi(t-\tau)]  d\tau||$.

Hence, by taking $\hat{\xi} = 0$, we have
\begin{equation*}
||\mathcal{T}_{x_0} (\xi) ||_\infty \leq C (\nu_0) + C(\alpha, A)||N|| \epsilon +  q \epsilon := C < \infty
\end{equation*}
where $C(\nu_0)$ is a constant term depending on the bound $\nu_0$ of function $\nu$. Thus $\mathcal{T}_{x_0} (B_ \infty (0, \epsilon)) \subset B_ \infty (0, C)$. Since $\mu :=  C(\alpha, A) ||N|| + q < 1$, we conclude that
\begin{equation*}
||\mathcal{T}_{x_0} (\xi) - \mathcal{T}_{x_0} (\hat{\xi}) ||_\infty \leq \mu ||\xi - \hat{\xi}||_\infty.
\end{equation*}

Therefore, $\mathcal{T}_{x_0} $ restricted to the set $\mathcal{C}_{\infty} =  \{ \xi \in \mathcal{C}([0, \infty); \mathbb{C}^n) : ||\xi||_{\infty} < \infty \} $ is a contraction auto map in a Banach space,  whereby it has  a unique fixed point. Since the unique solution to  $D^{\alpha} z = F(t, z):= Az + f(z(t)$ (the uniqueness follows from Lipschitz continuity of $F$ in the second variable) can be written as $z(t)= E_{\alpha} (t^{\alpha} A) z_0 + \int_0 ^t \tau^{\alpha-1}  E_{\alpha, \alpha}(\tau)  f(z(t-\tau)) d\tau$, we conclude that $x=\xi$ when $x_0 \in B_{\mathbb{C}^n}(0,\epsilon)$, whereby $z$ is bounded continuous function since $\xi \in \mathcal{C}_{\infty}$.

Define $\delta := \frac{ 1- \mu}{2\mu +1} \limsup_{t \rightarrow \infty} ||x(t)||$ (by part (i), we have that $\delta < \infty$) and $T$ such that $\sup_{t \geq T} ||x(t)|| \leq \limsup_{t \rightarrow \infty} ||x(t)|| + \delta $ (by part (i), such $T$ always exists since $x$ is bounded), then
\begin{equation*}
\begin{aligned}
\limsup_{t \rightarrow \infty} ||x(t)|| \leq & (\limsup_{t \rightarrow \infty} ||x(t)|| + \delta) (\sup_{t\geq 0} ||\int_T ^t \tau^{\alpha-1}  E_{\alpha, \alpha}(\tau)  N  d\tau||  + q)
\end{aligned}
\end{equation*}
where we use that $ \lim_{t \rightarrow \infty} || \int_0 ^t \tau^{\alpha-1}  E_{\alpha, \alpha}(A,\tau)  \nu (t-\tau) d\tau || = 0$ and $ \lim_{t \rightarrow \infty} E_{\alpha}(t)= 0$. Therefore,
\begin{equation*}
\limsup_{t \rightarrow \infty} ||x(t)|| \leq  \limsup_{t \rightarrow \infty} ||x(t)|| \frac {2\mu + \mu^2}{2\mu +1} < \limsup_{t \rightarrow \infty} ||x(t)||
\end{equation*}
which is a contradiction if $\limsup_{t \rightarrow \infty} ||x(t)|| \neq 0$. Hence, $\limsup_{t \rightarrow \infty} ||x(t)||=0$, whereby $\lim_{t \rightarrow \infty} ||x(t)|| = 0$.

The proof for $0< \alpha_i < 2$ is similar using the ideas of the proof of Theorem 2.4.

\end{proof}

\begin{remark}
(i) Theorem 2.7(i) is the extension of the BIBO stability result \cite[Theorem 1]{Malti} for non commensurate systems.

(ii) From Theorem 2.7(ii) one obtains the corresponding first Lyapunov theorem for mixed order systems, using similar arguments used in the proof of Theorem 2.4.

(iii) Actually, the statement of Theorem 2.7 can be improved. In Theorem 2.7(ii) $Q \neq 0$ holding condition of Theorem 2.7(iii). Condition (\ref{ww}) is improved by the condition $|\arg (\lambda_i)| > \frac{\alpha_i \pi}{2}$ for every $i=1, \ldots, n$.

(iv) Theorem 2.7(iii) holds using either that $\lim_{t \rightarrow \infty} Q(t)=0$ or that there exists $T\geq 0$ such that $\sup_{t\geq T} ||\tilde{Q}(t)|| < \frac{1}{C(\alpha, A)}$ where $C(\alpha, A)  := \sup_{t\geq 0}  \int_0 ^t  \tau^{\alpha-1} ||  E_{\alpha, \alpha}(\tau) || d\tau < \infty$ instead of $\sup_{t\geq 0} || \int_0 ^t  \tau^{\alpha-1}   E_{\alpha, \alpha}(\tau)\tilde{Q}(t-\tau) || d\tau$.
\end{remark}

\section{Robustness and convergence of fractional adaptive schemes}

\setcounter{section}{3}
\setcounter{equation}{0}\setcounter{theorem}{0}

In \cite{FA}, the importance in (control or identification) adaptive problems of the following system of equations, called Fractional Error Model of Type I, was stressed,
\begin{equation} \label{errorI}
\begin{cases} e = \phi^T w + \nu  \\ D^{\alpha} \phi = - \gamma e w = - \gamma w w^T \phi - \gamma w \nu  \end{cases}
\end{equation}
where $\alpha_i \in (0,2)$ for $i=1, \ldots, n$ and $D^{\alpha} \phi$ is the vector of components $D^{\alpha_i} \phi_i$ for $i=1, \ldots, n$, $e: [0, \infty) \rightarrow \mathbb{R}$ is a measure of the adaptation error, $w: [0, \infty) \rightarrow \mathbb{R}^n$ is the information signal, $\phi: [0, \infty) \rightarrow \mathbb{R}^n$ is the parameter error and $\nu: [0, \infty) \rightarrow \mathbb{R}$ is a perturbation function due to noise, imperfect modeling or initial conditions terms. $\gamma>0$ is an scalar or matrix parameter that can be suitably chosen to handle the speed of convergence. It is assumed that $w$ is bounded, otherwise it can be normalized using $\gamma$ (e.g. $\gamma= \frac{\gamma'}{1 + w^Tw}$). For simplicity, we assume $\gamma=1$ in the following. We will make abstraction of the specific problem where this scheme comes from.

We seek for conditions on $w$ so that $\phi$ converges to zero. Since $w$ is bounded, we will also have that the adaptation objective is reached, namely $\lim_{t \rightarrow \infty} e=0$. To use theorems stated in Section 2 we must take $w$ such that
\begin{equation} \label{connection}
-w(t) w^T(t) \leq A + Q(t)
\end{equation}
with $\sup_{t \geq T} ||Q||$ small enough for some $T\geq0$ and a suitable matrix $A$.

\subsection{Ideal case: $\nu=0$}

For those $w$ satisfying condition (\ref{connection}) in equality, that is, $-w(t) w^T(t) = A + Q(t)$ then $w$ is allowed to be just piece-wise continuous by Theorem 2.5 (cf. \cite[Theorem 2, Lemma 3]{FA}) where differentiability on $w$ was required). By applying Theorem 2.3 and Remark 2.4, we can extend the fractional adjustment to $\alpha \geq 1$ whereas in \cite{FA} it was proved only for $\alpha \in (0,1]$. By applying Theorem 2.7(iii), different orders on the adjustment parameters law is allowed (cf. \cite{FA} where the same order was used). Note that if $A$ is chosen with  negative real eigenvalues (e.g. $A= -\epsilon I$ for $\epsilon >0$), the condition on the eigenvalues of $A$ of above theorems holds. We remark that since the choice of the order of derivation on the adjustment parameters law is a relevant variable of optimization \cite[Example 5]{FA}, the established theorems effectively ample the tools for the designer of adaptive schemes.

For the scalar case, condition (\ref{connection}) takes the form $-w^2 \leq -a + q(t)$. If, for instance, $w^2 = \epsilon$ we choose $a=\epsilon$ and $q=0$. If  $\lim_{t \rightarrow \infty} w^2(t) = \epsilon$ we choose $a=\epsilon$ and $q=w^2 - \epsilon$ (it follows from Corollary 2.1). If $w^2 = sin^2 t$, we know that $sin^2 t > \epsilon$ up to a small periodicity. By taking a small enough $\epsilon$, we can define a function $q(t)$ zero everywhere up to a small periodicity. We note that those examples are special cases for which $w$ is persistently exciting. This can be formalized and generalized in the following way.

By definition, if $w:[0, \infty) \rightarrow  \mathbb{R}^n$ belongs to$   PE(n)$ then there exists $T_0>0$ and $\epsilon>0$ such that for any $t \geq 0$, $\int_t ^{t+ T_0} ww^T d\tau \geq \epsilon T_0 I$ where $I$ is the identity matrix (see e.g. \cite[\S 1]{Narendra}). Therefore, it is natural to write for any $t$, $w(t) w^T(t) \geq \epsilon I + Q(t)$ where $Q(t)$ vanishes in $[t,t+ T_0]$ or it is less or equal in norm than $\epsilon$ (e.g. $q(t) := sin^2(t) - \epsilon $ if $sin^2(t) < \epsilon$ else $0$, then $sin^2(t) \geq \epsilon + q(t)$. Note that $q$ is a piece continuous function). To use Theorem 2.1 (Remark 2) or Theorem 2.2, we require to choose an small enough $\epsilon>0$ such that $\epsilon C(\alpha, \epsilon I) <1$, which is justified since $E_{\alpha, \alpha}(t,A)$ is polynomial in $A$ (see e.g. \cite[Equation (1.56)]{Podlubny}) and $C(\alpha, \epsilon I)< \infty$. Thus, this choice of $Q$ guarantees for $w \in PE(n)$ the convergence of $\phi$, a hence of $e$, to zero.

On the other hand, one can use condition (\ref{Q}). We will first  prove that since $\sup_{t \geq 0} \int_0 ^t || \tau^{\alpha-1} E_{\alpha, \alpha}(\tau ^{\alpha} \epsilon I)|| d\tau =  C(\alpha, \epsilon I) < \infty$, then there exists a non negative almost periodic pulse function $p=p(t)$ (namely, there exists a time $T_1$ so that for any $t_0>0$, the function $p$ is not null in the interval $[t_0, t_0 + T_1]$) of amplitude $p_0 >0$ such that $\int_0 ^t || \tau^{\alpha-1} E_{\alpha, \alpha}(\tau ^{\alpha} \epsilon I)|| p(\tau) d\tau < 1$ for every $t$ and hence $\sup_{t \geq 0} \int_0 ^t || \tau^{\alpha-1} E_{\alpha, \alpha}(\tau ^{\alpha} \epsilon I)|| p(\tau) d\tau < 1$.

If $C(\alpha, \epsilon I) < 1$ the claims is obvious, by choosing $p_0 \leq 1$, since  we have $\int_0 ^t || \tau^{\alpha-1} E_{\alpha, \alpha}(\tau ^{\alpha} \epsilon I)|| p(t-\tau) d\tau \leq \int_0 ^t || \tau^{\alpha-1} E_{\alpha, \alpha}(\tau ^{\alpha} \epsilon I)|| d\tau < 1$ for any almost periodic pulse.

Now suppose $C(\alpha, \epsilon I) \geq 1$. One alternative is decreasing $\epsilon$ such that $C(\alpha, \epsilon I) < 1$. But this process could not be enough general. We instead reason by contradiction. If it were not the case that such pulse exists, one could take a sequence $(p_n)_{n \in \mathbb{N}}$ of almost periodic pulse functions $p_n$ of wide converging to zero such that $\int_0 ^t (t-\tau)^{\alpha-1} ||E_{\alpha, \alpha}((t-\tau) ^{\alpha} \epsilon I)|| \>  p_n(\tau) d\tau > 1$ and with the property of $\frac{1}{p_0}\sum_n \int_0 ^t (t-\tau)^{\alpha-1} ||E_{\alpha, \alpha}((t-\tau) ^{\alpha} \epsilon I)|| \>  p_n(\tau) d\tau = \int_0 ^t (t-\tau)^{\alpha-1} ||E_{\alpha, \alpha}((t-\tau) ^{\alpha} \epsilon I)|| d\tau < \infty$. For this, it is enough to define $p_n$ in an n-partition of the interval $[0, T_1]$ and recursively on $[T_1, T_2 + T_1]$. But this is a contradiction, since $\frac{1}{p_0} \sum_n \int_0 ^t (t-\tau)^{\alpha-1} ||E_{\alpha, \alpha}((t-\tau) ^{\alpha} \epsilon I)|| \>  p_n d\tau \geq  \frac{1}{p_0}\sum_n 1 \rightarrow \infty$ as $n \rightarrow \infty$.  Hence, we have a pulse function $p$ such that $\sup_{t \geq 0} \int_0 ^t || \tau^{\alpha-1} E_{\alpha, \alpha}(\tau ^{\alpha} \epsilon I)|| p(t-\tau) d\tau < 1$.

Note that any non negative almost periodic pulse $p'$ such that $p' \leq p$ share the same property. Therefore we can assume that $p_0\leq 1$. We repeat the process for function $f(t)= E_{\alpha, \alpha}(\tau ^{\alpha} \epsilon I) - E_{\alpha, \alpha}(\tau ^{\alpha} \epsilon I) p(\tau)$ instead of $E_{\alpha, \alpha}(\tau ^{\alpha} \epsilon I)$ (function $f$ being the same as $E_{\alpha, \alpha}(\tau ^{\alpha} \epsilon I)$ but cut off by $p$) until we get a function $f$ with $\sup_{t \geq 0} \int_0 ^t || \tau^{\alpha-1} f(\tau) )|| d\tau =  C_f(\alpha, \epsilon I) < 1$. In that case, we get that any almost periodic pulse with $p_0 \leq 1$ will give $\sup_{t \geq 0} \int_0 ^t || \tau^{\alpha-1} E_{\alpha, \alpha}(\tau ^{\alpha} \epsilon I)|| p(t-\tau) d\tau < 1$

Taking, $ ||Q(t)|| \leq p(t)$, we have $\int_0 ^t (t-\tau)^{\alpha-1} ||E_{\alpha, \alpha}((t-\tau) ^{\alpha} \epsilon I) Q(t)||  d\tau < 1$,  whereby $w w^T \geq \epsilon I + Q(t)$ and $Q$ holds condition (\ref{Q}). Hence, for $w \in PE(n)$ we have $\phi$ and $e= \phi^T w$ converging to zero.

\subsection{Perturbed case: $\nu \neq 0$}

We first show that there exist bounded destabilizing perturbations  justifying why it is worth to design robust schemes. Consider the scalar case and the following system
\begin{equation*}
\begin{cases}  e= \phi w + \nu \\ D^{\alpha} \phi= - e w \\ w = e \\ \nu= 1\end{cases}
\end{equation*}

Thus,
\begin{equation*}
D^{\alpha} \phi= - \frac{1}{(1-\phi)^2}.
\end{equation*}

Let us consider $\alpha=1$. Since $\frac{1}{(1-\phi)^2} > 0$, $\phi$ is decreasing. Hence, $\phi$  converges to some value or it diverges to $-\infty$. Suppose that $\lim_{t \rightarrow \infty} \phi =  L$ then $\lim_{t \rightarrow \infty} D \phi = - \frac{1}{(1-L)^2}  < 0$, which is a contradiction since $\phi$ converges and in particular it is bounded. Then $\phi$ diverges to $-\infty$ and $\lim_{t \rightarrow \infty} (e= \frac{\nu}{1-\phi})=0$.

Let us consider consider $\alpha >0$. By a similar reasoning there are two possibilities either $\phi$ is bounded or diverges to $-\infty$. However if $a <\phi(t) < b$ for some $a,b \in \mathbb{R}$ then $ c < D^{\alpha} \phi < d < 0$ for negative numbers $c,d \in \mathbb{R}$. By $\alpha$-integrating we have the same contradiction as above. Then $\phi$ diverges to $-\infty$ and $\lim_{t \rightarrow \infty} e= 0$.

Therefore, $\nu \equiv 1$ is a destabilizing perturbation for such a choice of $w$. Note that $w \notin PE(1)$ since $\lim_{t \rightarrow \infty}w (t)= 0$.  A similar construction was done in \cite[\S III.2]{Narendra} for integer Error Model of Type II for the specific control problem, where the assumption $e=w$ comes out naturally for an specific control problem.

Let us consider now $\nu \neq 0$ any bounded continuous function. System (\ref{errorI}) can be expressed in the following form
\begin{equation*}
D^{\alpha}\phi(t)= -w(t)w^T(t)\phi(t) - \nu(t) w(t).
\end{equation*}

When $\alpha=1$, $w \in PE(n)$ and $\nu \equiv 0$, it was proved in \cite{Mor} that $\phi$ converges exponentially to zero. That implies that the state transition matrix $\Phi(t,t_0)$ converges exponentially to zero . Since the solution for a general $\nu$ can be written as
\begin{equation}\label{tv}
\phi(t)= \Phi(t,t_0)\phi(t_0) - \int_0^t \Phi(t-\tau,t_0)  \nu(\tau) w(\tau) d\tau
\end{equation}
where $\Phi(t,t_0)$ converges exponentially to zero (recall that $\phi(t)=\Phi(t,t_0)\phi(t_0)$) and $\nu w$ is bounded, the integral term is bounded, whereby $\phi$ is bounded.

A direct generalization of the above argument to the case $\alpha >0$  has the following difficulties. In principle, one can postulate for the system
\begin{equation*}
D^{\alpha}x(t)= A(t)x(t)
\end{equation*}
a matrix function $\Phi(t)$ so that $x(t)=\Phi(t)x(0)$ for any $x(0) \in \mathbb{R}^n$. Note that since Caputo derivative is initialized at $t=0$, there is no need to use $\Phi(t,t_0)$ since $t_0=0$ is fixed. Therefore, by $\alpha-$differentianting $x(t)=\Phi(t)x(0)$ and given that $x(0)$ is arbitrary, function $\Phi(t)$ must satisfies the following equation
\begin{equation*}
D^{\alpha}\Phi(t)= A(t)\Phi(t)
\end{equation*}
with $\Phi(0)=I$ the identity matrix. With this matrix one postulates as solution to the system
\begin{equation*}
D^{\alpha}x(t)= A(t)x(t) + f(x)
\end{equation*}
the expression
\begin{equation}\label{var}
x(t)= \Phi(t)x(0) + I^{\alpha} [\Phi(t-\cdot) f(\cdot)] (t).
\end{equation}

Taking Caputo $\alpha$-derivative of (\ref{var}) and using that $D^{\alpha}I^{\alpha}[f(\cdot)](t)=f(t)$ for $f$ a differentiable function, we obtain $D^{\alpha}x(t)= A(t)x(t) + f(x)$. By uniqueness of the solution, we conclude that (\ref{var}) is the solution. However, neither in the series of papers of Cong et al. nor other revised literature a condition is stated to estimate the speed of convergence or more precisely conditions to have $\Phi(t) \in \mathcal{L}^1$ for time varying linear systems (see also Remark 1.(ii). In the proof of \cite[Theorem 2]{FS} an estimate for the speed of convergence of $x$ can be deduced but it is not enough to conclude that  $\Phi(t) \in \mathcal{L}^1$. Moreover, after \cite[Lemma 3.1]{Cong.L} non negative exponential order is to be hoped in linear time varying Caputo fractional systems.

Consider $w \in EP(n)$ such that  $-w(t) w^T(t) = A + Q(t)$ where $Q(t)$ and matrix $A$ are chosen as in $\S 3.1$. For any additive bounded perturbation in the error measurement, we have
\begin{equation*}
D^{\alpha}\phi(t)= (A + Q(t))\phi(t) - \nu(t) w(t).
\end{equation*}

Hence, Theorem 2.1 can be applied since $\nu w$ is a bounded function, which gives a robustness result for adaptive schemes without modifying the adaptive laws, guaranteeing that for vanishing disturbances its properties remains unchanged. Moreover, by Theorem 2.1(ii), if $\nu$ converges to zero, then $\phi$ converges to zero. From Theorem 2.7,
the results hold for different order of derivation in the adjustment of each parameter.

%\newpage

\subsection{Error Model of Type II}

We study the Error Model of Type II defined by the following system
\begin{equation} \label{errorII}
\begin{cases} D^{\alpha} e = Ae + \phi^T w + \nu  \\ D^{\beta} \phi = -  e w \end{cases}
\end{equation}
where $e: [0, \infty) \rightarrow \mathbb{R}^n$,  $\phi: [0, \infty) \rightarrow \mathbb{R}^n$, $w: [0, \infty) \rightarrow \mathbb{R}^n$ and   $\alpha, \beta$ are seen as vectors of non negative components, in such a way that e.g. $ D^{\alpha} e $ is a vector of components $D^{\alpha_i} e_i $ for $i=1. \ldots, n$.

This system was studied in \cite{FA}, for the special case of $\alpha=\beta \leq 1$  but none explicit condition was established for the convergence to zero of the error.

Defining $x= (e^T, \phi^T)^T$, system (\ref{errorII}) is equivalent to
\begin{equation*}
D^{\alpha} x = \begin{bmatrix}{A}&{w^T}\\{-w}&{0}\end{bmatrix} x  +  \begin{bmatrix}{\nu}\\{0}\end{bmatrix}.
\end{equation*}

Again, to apply theorems of $\S 2$ we must express
\begin{equation} \label{wII}
\begin{bmatrix}{A}&{w^T(t)}\\{-w(t)}&{0}\end{bmatrix} = \Lambda + Q(t)
\end{equation}
for suitable $\Lambda$ and $Q$. This condition is in fact a condition on $w$. For such $w$, Theorem 2.7 guarantees convergence to zero of $(e,\phi)$ if $\nu \equiv 0$ and boundedness of  $(e,\phi)$ if $\nu$ is bounded.  Theorem 2.2 allows to relax condition (\ref{ww}) to an inequality when $\alpha_i=\alpha<1$ and Theorem 2.5 to piece continuous functions $w$. When $\alpha=1$, $\nu \equiv 0$ and $w$ is such that for all $t>0$ and for all constant unit vector $u \in \mathbb{R}^n$ there exists $T_0, \epsilon$ with $|\int_t^{t+T_0} u^Tw(\tau) d\tau| \geq \epsilon T_0$, the system is exponentially stable, whereby if $\nu$ is bounded $x$ is bounded \cite{Narendra}. So, it seems natural to look into the set of functions that hold condition (\ref{ww}).

We examine as a simple case $w, e$ scalars, $w=w_0 \neq 0$ constant, $A$ is a constant $-a < 0$. Taking $\Lambda := \begin{bmatrix}{-a}&{w_0}\\{-w_0}&{0}\end{bmatrix}$, its characteristic polynomial is $\lambda^2 + a \lambda + w_0^2=0$. Then, $\Lambda$ is stable in the integer sense and condition (\ref{ww}) trivially holds  (eventually, in contrast with the case $\alpha=1$, $a$ could be negative to have $\Lambda$ stable in the fractional sense).

Now if $w= w_0 + q(t)$ where $q$ converges to zero or it is small enough, then, $\Lambda  + Q := \begin{bmatrix}{-a}&{w_0}\\{-w_0}&{0}\end{bmatrix}+ \begin{bmatrix}{0}&{q}\\{q}&{0}\end{bmatrix}$. Therefore $ || Q(t) ||_F = |q(t)| || \begin{bmatrix}{0}&{1}\\{1}&{0}\end{bmatrix} ||_F  = 2 |q(t)|$ and condition $\sup_{t\geq T} ||Q(t)|| < \frac{1}{C(\alpha, A)}$ of Corollary 2.1 holds for $||q||_\infty$ small enough.

More generally, note that the condition $|\int_t^{t+T_0} u^Tw(\tau) d\tau| \geq \epsilon T_0$ implies that we can write $w(t)= \pm \epsilon u + q(t)$ where $q(t)$ is such that $\int_t^{t+T_0} u^Tq(t) d\tau \geq 0$. Then, we can write $\Lambda + Q(t) = \begin{bmatrix}{-a}&{\pm \epsilon u^T}\\{\mp \epsilon u}&{-\mu I}\end{bmatrix} + \begin{bmatrix}{0}&{q^T}\\{-q}&{\mu I}\end{bmatrix}$, where $\mu>0$ is chosen small enough. The conditions for $Q$ are similar as in $\S 3.1$.

%%%%%%%%%%%%%%%%%%%%%%%%%%%%%%%%%%%%%%%%%%%%%%%%%
\section*{Acknowledgements}

 The authors thank CONICYT-Chile for the support under Grant No FB0809 'Centro de Tecnologia para la Mineria' and FONDECYT 1150488 'Fractional Error Models in Adaptive Control and Applications'.

%%%%%%%%%% References %%%%%%%%%%%%%%%%%%%%%%%%%%%%%%%%
%%%% arranged in ALPHABETIC ORDER of Authors' Families
%%%% for articles, insert also DOI numbers if available

  %%%%%%%%%%%%%%%%%%%%%%%%%%%%%%%%

%%%%%%%%%% put authors' addresses here, in \it %%%%%%%%

 \bigskip \smallskip

 \it

 \noindent
   %(First) Author's full postal address
$^1$ Department of Electrical Engineering \\
University of Chile \\
Av. Tupper 2007 \\
Santiago, CHILE  \\[4pt]
e-mail: jgallego@ing.uchile.cl 
\hfill Received: September 15, 2016 \\[12pt]
$^2$ Department of Electrical Engineering \\
University of Chile \\
Av. Tupper 2007 \\
Santiago, CHILE \\ [4pt]
e-mail: mduarte@ing.uchile.cl
\end{document}